\newcommand\reallywidehat[1]{%
\savestack{\tmpbox}{\stretchto{%
  \scaleto{%
    \scalerel*[\widthof{\ensuremath{#1}}]{\kern-.6pt\bigwedge\kern-.6pt}%
    {\rule[-\textheight/2]{1ex}{\textheight}}
  }{\textheight}%
}{0.5ex}}%
\stackon[1pt]{#1}{\tmpbox}%
}
\def\Xint#1{\mathchoice
{\XXint\displaystyle\textstyle{#1}}%
{\XXint\textstyle\scriptstyle{#1}}%
{\XXint\scriptstyle\scriptscriptstyle{#1}}%
{\XXint\scriptscriptstyle\scriptscriptstyle{#1}}%
\!\int}
\def\XXint#1#2#3{{\setbox0=\hbox{$#1{#2#3}{\int}$ }
\vcenter{\hbox{$#2#3$ }}\kern-.6\wd0}}
\def\fint{\Xint-}
\newtheorem{thm}{Theorem}[section]
\newtheorem{prop}[thm]{Proposition}
\declaretheoremstyle[notefont=\bfseries,notebraces={}{},%
    headpunct={},postheadspace=1em]{mystyle}
\newtheorem{lem}[thm]{Lemma}
\theoremstyle{definition}
\theoremstyle{remark}
\numberwithin{equation}{section}
\newcommand{\BMO}{\text{BMO}}
\newcommand{\reals}{\mathbb{R}}
\newcommand{\cplx}{\mathbb{C}}
\DeclareMathOperator{\dee}{d}
\newcommand{\dx}{\dee \! x}
\newcommand{\dy}{\dee \! y}
\newcommand{\dt}{\dee \! t}
\newcommand{\al}{\alpha}
\newcommand{\be}{\beta}
\newcommand{\innp}[1]{\left \langle #1 \right \rangle}
\newcommand{\MS}[1]{\mathscr{#1}}
 \newcommand{\norm}[1]{\ensuremath{\left\|#1\right\|}}  \newcommand{\abs}[1]{\ensuremath{\left\vert#1\right\vert}}  \newcommand{\pr}[1]{\ensuremath{\left(#1\right)}}
\newcommand{\MC}[1]{\ensuremath{\mathcal{#1}}}   \newcommand{\D}{\mathcal{D}}   \newcommand{\T}[1]{\text{#1}} \newcommand{\R}{\mathbb{R}}   \newcommand{\N}{\mathbb{N}}     \newcommand{\V}[1]{\vec{#1}}  \newcommand{\Cn}{\ensuremath{\mathbb{C}^n}}
 \newcommand{\f}[2]{\frac{#1}{#2}}  
  \newcommand{\Rd}{\mathbb{R}^d}
         \renewcommand{\v}[1]{\ensuremath{\vec{#1}}} \newcommand{\br}[1]{{\ensuremath{\left[ #1\right]}}}
\newcommand{\BMOVUpq}{\ensuremath{{{\text{BMO}}_{V, U} ^{p, q}}}}
  \newcommand{\BMOVUTpq}{\ensuremath{{{\widetilde{\text{BMO}}}_{V, U} ^{p, q}}}}
  \newcommand{\BMOVUTpqd}{\ensuremath{{{\widetilde{\text{BMO}}}_{U', V'} ^{q', p'}}}}
  \newcommand{\Mn}{\ensuremath{\mathbb{M}_{n \times n}}}
     \newcommand{\inrd}{\ensuremath{\int_{\mathbb{R}^d}}}  
\newcommand{\Apq}[1]{\ensuremath{\norm{#1}_{\text{A}_{p, q}}}}
\newcommand{\Aq}[1]{\ensuremath{\norm{#1}_{\text{A}_{ q}}}}
\newcommand{\mymor}[3]{#1\colon#2\longrightarrow#3}
\newcommand{\mynorm}[1]{\left| \left | #1 \right| \right|}				 	 
\newcommand{\myset}[1]{ \left\{#1\right\} }
\newcommand{\myinnerprod}[2]{\left< #1, #2 \right>}
\newcommand{\myabs}[1]{\left| #1 \right|}
\newcommand{\set}[1]{\left\{#1\right\}}
\newcommand{\ol}{\overline}
\begin{document}

\title[Two Weighted Inequalities for Commutators]{Two Matrix Weighted Inequalities for Commutators with Fractional Integral Operators}


\author{Roy Cardenas}
\address{University at Albany, SUNY, Department of Mathematics, 1400 Washington Ave, Albany, NY 12222}
\curraddr{}
\email{rcardenas@albany.edu}
\thanks{}

\author{Joshua Isralowitz}
\address{University at Albany, SUNY, Department of Mathematics, 1400 Washington Ave, Albany, NY 12222}
\curraddr{}
\email{jisralowitz@albany.edu}
\thanks{}

\subjclass[2010]{42B20}

\date{}

\dedicatory{}

\commby{}

\begin{abstract}
In this paper we prove two matrix weighted norm inequalities for the commutator of a fractional integral operator and multiplication by a matrix symbol.  More precisely, we extend the recent results of the second author, Pott, and Treil on two matrix weighted norm inequalities for commutators of Calderon-Zygmund operators and multiplication by a matrix symbol to the fractional integral operator setting.  In particular, we completely extend the fractional Bloom theory of Holmes, Rahm, and Spencer to the two matrix weighted setting with a matrix symbol.
\end{abstract}

\maketitle


\section{Introduction}

Let $w$ be a weight on $\Rd$ and let $L^p(w)$ be the standard weighted Lebesgue space with respect to the norm \begin{equation*} \|f\|_{L^p( w)} = \left(\inrd |f(x)|^p w(x) \, dx \right)^\frac{1}{p}. \end{equation*} Furthermore, let A${}_{p, q}$ for $p, q > 1$ be the  Muckenhoupt class of weights $w$ satisfying \begin{equation*} \sup_{\substack{Q \subseteq \Rd \\ Q \text{ is a cube}}} \left( \fint_Q w (x) \, dx \right)\left( \fint_Q w ^{-\frac{p'}{q}}     (x) \, dx\right)^{\frac{q}{p'}} < \infty \end{equation*} where $\fint_Q$ is the unweighted average over $Q$ (which will also occasionally be denoted by $m_Q$). When $p = q$ we write $A_{p} := A_{p, p}$ as usual.

Given a weight $\nu$, we say $b \in \text{BMO}_{\nu}$ if  \begin{equation*} \|b\|_{\text{BMO}_\nu} = \sup_{\substack{Q \subseteq \Rd \\ Q \text{ is a cube}} } \frac{1}{\nu(Q)} \int_Q |b(x) - m_Q b | \, dx  < \infty \end{equation*} (where $\nu(Q) = \int_Q \nu$) so that clearly
 $\text{BMO} = \text{BMO}_\nu$ when $\nu \equiv 1$.  Further, given a linear operator $T,$ define the commutator $[M_b, T] = M_b T - TM_b$ with $M_b$ being multiplication by $b$.  In the papers \cite{HLW1,HLW2} the authors extended earlier work of S. Bloom \cite{BL} and proved that if $u, v \in \text{A}_p$ and $T$ is any Calder\'{o}n-Zygmund operator (CZO) then \begin{equation} \|[M_b, T]\|_{L^p(u) \rightarrow L^p(v)} \lesssim \|b\|_{\text{BMO}_\nu} \label{HWUpper}  \end{equation} where $\nu = (uv^{-1})^\frac{1}{p}$ and it was proved in \cite{HLW2} that if $R_s$ is the $s^\text{th}$ Riesz transform then \begin{equation} \|b\|_{\text{BMO}_\nu} \lesssim \max_{1 \leq s \leq d } \|[ M_b, R_s]\|_{L^p(u) \rightarrow L^p(v )}.  \label{HWLower}  \end{equation}

Furthermore, let $I_\alpha$ be the fractional integral operator defined by the formula
\begin{equation*}
    I_\alpha {f}(x) = \int_{\reals^d} \frac{{f}(y)}{\myabs{x-y}^{d - \alpha}} \dy, \text{ for } 0 < \alpha < d.
\end{equation*}

\noindent It was proved in \cite{HRS} that if $0 < \alpha < d$ and  $\alpha/d + 1/q = 1/p$,  if $u, v \in A_{p, q}$, and if $\nu = u^\frac{1}{q} v^{-\frac{1}{q}}$ then \begin{equation} \|[M_b, I_\al]\|_{L^p(u^\frac{p}{q}) \rightarrow L^p(v)} \approx \|b\|_{\BMO(\nu)} \label{HRSBloom}  \end{equation}

On the other hand, matrix weighted extensions and generalizations of \eqref{HWUpper} and \eqref{HWLower} that surprisingly hold for two arbitrary matrix weights (and provided new results even in the scalar $p = 2$ setting of a single scalar weight) were proved in \cite{IPT}, and it is the purpose of this paper to extend the results of \cite{IPT} to the fractional setting,  providing matrix weighted extensions of \eqref{HRSBloom} that hold for two arbitrary matrix weights. Note that for the rest of this paper we will assume that $0 < \alpha < d$ and $\alpha/d + 1/q = 1/p$.

  In particular, for any linear operator $T$ acting on scalar valued functions on $\Rd$, we can canonically extend $T$ to act on $\Cn$ valued functions $\V{f}$ by the formula $T\V{f} := \sum_{j = 1}^n \pr{T \innp{\V{f}, \V{e}_j}_{\Cn} } \V{e}_j$ where $\{\V{e}_j\}$ is any orthonormal basis of $\Cn$ (and note that this is easily seen to be independent of the orthonormal basis chosen.)  Let $W : \Rd \rightarrow \Mn$ be  an $n \times n$ matrix weight  (a positive definite a.e. $\Mn$ valued function on $\Rd$) and let $L^p(W)$ be the space of $\Cn$ valued functions $\v{f}$ such that \begin{equation*} \|\v{f}\|_{L^p(W)}  = \left(\inrd |W^\frac{1}{p}(x) \v{f}(x)|^p \, dx \right)^\frac{1}{p} < \infty. \end{equation*}   Furthermore, for $p, q > 1$ we will say that a matrix weight $W$ is a matrix A${}_{p, q}$ weight (see \cite{IM}) if it satisfies \begin{equation} \label{MatrixApqDef} \Apq{W} = \sup_{\substack{Q \subset \R^d \\ Q \text{ is a cube}}} \fint_Q \left( \fint_Q \|W^{\frac{1}{q}} (x) W^{- \frac{1}{q}} (y) \|^{p'} \, dy \right)^\frac{q}{p'} \, dx  < \infty \end{equation} and when $p = q$ we say $W$ is a matrix A${}_p$ weight (see \cite{R}).

Now for scalar weights $u$ and $v$,  notice that by multiple uses of the A${}_q$ property and H\"{o}lder's inequality we have \begin{align*} m_Q \nu   \approx (m_Q u) ^\frac{1}{q} (m_Q v^{-\frac{q'}{q}})^\frac{1}{q'}   \approx (m_Q u) ^\frac{1}{q} (m_Q v)^{-\frac{1}{q}}  \approx (m_Q u^\frac{1}{q}) (m_Q v^\frac{1}{q})^{-1}.  \end{align*}  Thus, $b \in \text{BMO}_\nu$ when $u$ and $v$ are A${}_q$ weights if and only if

\begin{equation*} \sup_{\substack{Q \subseteq \R \\ Q \text{ is a cube}} }  \fint_Q (m_Q v^\frac{1}{q}) (m_Q u^\frac{1}{q})^{-1}  |b(x) - m_Q b | \, dx  < \infty, \end{equation*} which is a condition that easily extends to the matrix weighted setting, noting that  $A_{p, q} \subset A_q$ when $q > p$, since then $q' < p'$ and so H\"{o}lder's inequality gives us \begin{equation*} \fint_Q \left( \fint_Q \|W^{\frac{1}{q}} (x) W^{- \frac{1}{q}} (y) \|^{q'} \, dy \right)^\frac{q}{q'} \, dx \leq \fint_Q \left( \fint_Q \|W^{\frac{1}{q}} (x) W^{- \frac{1}{q}} (y) \|^{p'} \, dy \right)^\frac{q}{p'} \, dx. \end{equation*}

  Namely, if $U, V$ are $n \times n$ matrix A${}_{p, q}$ weights, then we define $\BMOVUpq$ to be the space of $n \times n$ locally integrable matrix functions $B$  where \begin{equation*} \|B\|_{\BMOVUpq} = \sup_{\substack{Q \subseteq \Rd \\ Q \text{ is a cube}} }  \pr{\fint_Q \|(m_Q V^\frac{1}{q}) (B(x) - m_Q B)(m_Q U^\frac{1}{q})^{-1} \| \, dx}^\f{1}{q}  < \infty \end{equation*} so that $\|b\|_{\BMOVUpq}  \approx \|b\|_{\text{BMO}{\nu}}$  if $U, V$ are scalar weights and $b$ is a scalar function. Note that the $\BMOVUpq$ condition is much more naturally defined in terms of reducing matrices, which will be discussed in Section \ref{JNFracSec}.

   We will need a definition before we state our first result.  We say that a linear operator $R$ acting on  scalar functions is a fractional lower bound operator if for any $n \in \N$ and any $n \times n$  matrix weight $W$ we have  \begin{equation} \Apq{W} ^\frac{1}{q}  \lesssim \|T\|_{L^p(W^\frac{p}{q}) \rightarrow L^q(W)} \label{LBO} \end{equation} \noindent with the bound independent of $W$ (but not necessarily independent of $n$), and $\|T\|_{L^p(W^\frac{p}{q}) \rightarrow L^q(W)} < \infty$ if $W$ is a matrix A${}_{p, q}$ weight.

\begin{thm} \label{BloomFrac}  Let $T$ be any linear operator acting on scalar valued functions where its canonical $\Cn$ valued extension is bounded from $L^p(W^\frac{p}{q})$ to $L^q(W)$ for all $n \times n$ matrix A${}_{p, q}$ weights $W$ and all $n \in \N$ with bound depending on $T, n, d, p$, and $\Apq{W}$ (which is known to be true for fractional integral operators, see \cite[Theorem 1.4]{IM}.)  If $U, V$ are $m \times m$ matrix A${}_p$ weights and $B$ is an $m \times m$ locally integrable matrix function for some $m \in \N$,  then \begin{equation} \|[M_B, T]\|_{L^p(U^{\frac{p}{q}}) \rightarrow L^q(V)} \lesssim \|B\|_{\BMOVUpq} \label{RoyJoshBloomUpper}\end{equation} with bounds depending on $T, m, d, p, \Apq{U}$ and $\Apq{V}$.

Furthermore, for any fractional lower bound operator $T$ we have the lower bound estimate \begin{equation} \|B\|_{\BMOVUpq} \lesssim \|[M_B, T]\|_{L^p(U^{\frac{p}{q}}) \rightarrow L^q(V)} \label{RoyJoshBloomLower} \end{equation}
\end{thm}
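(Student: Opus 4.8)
The plan is to reduce both inequalities to the single matrix weight case by a block decomposition, and then to prove \eqref{RoyJoshBloomUpper} by an exponential (Coifman--Rochberg--Weiss) argument and \eqref{RoyJoshBloomLower} by a testing argument keyed to \eqref{LBO}. For the reduction, given $U,V$ and $B$ put $W=\pr{\begin{smallmatrix}U&0\\0&V\end{smallmatrix}}$ and $\mathbf B=\pr{\begin{smallmatrix}0&0\\B&0\end{smallmatrix}}$. Since $U,V$ are matrix $\text{A}_p$ weights and $q>p$ (so that $\text{A}_p\subset\text{A}_{p,q}$), $W$ is a matrix $\text{A}_{p,q}$ weight with $\Apq{W}\lesssim\Apq{U}+\Apq{V}$; and one checks directly that $[M_{\mathbf B},T](\vec f_1\oplus\vec f_2)=0\oplus[M_B,T]\vec f_1$, that $\|\vec f_1\oplus 0\|_{L^p(W^{p/q})}=\|\vec f_1\|_{L^p(U^{p/q})}$, $\|0\oplus\vec g\|_{L^q(W)}=\|\vec g\|_{L^q(V)}$, and --- because the reducing matrix of $W$ on a cube is the block diagonal of the reducing matrices of $U$ and $V$ --- that $\|\mathbf B\|_{\BMO_{W,W}^{p,q}}\approx\|B\|_{\BMOVUpq}$. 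Hence $\|[M_{\mathbf B},T]\|_{L^p(W^{p/q})\to L^q(W)}=\|[M_B,T]\|_{L^p(U^{p/q})\to L^q(V)}$, and it suffices to prove both bounds for a single matrix $\text{A}_p$ weight $W$ and an arbitrary locally integrable $B$ (the single weight case being $U=V$).

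For the upper bound I would use the Cauchy representation
\[
[M_B,T]=\frac1{2\pi i}\oint_{|z|=r}M_{e^{zB}}\,T\,M_{e^{-zB}}\,\frac{dz}{z^{2}},
\]
legitimate since $z\mapsto M_{e^{zB}}TM_{e^{-zB}}$ is operator-valued holomorphic near the disc, equals $T$ at $z=0$, and has derivative $[M_B,T]$ there. Writing $\vec h=e^{-zB}\vec f$ and $W_z=\pr{(e^{zB})^{*}W^{2/q}e^{zB}}^{q/2}$, so that $|W_z^{1/q}(x)\xi|=|W^{1/q}(x)e^{zB(x)}\xi|$, the key point is that the \emph{same} weight appears on input and output:
\[
\big\|M_{e^{zB}}TM_{e^{-zB}}\big\|_{L^p(W^{p/q})\to L^q(W)}=\|T\|_{L^p(W_z^{p/q})\to L^q(W_z)}.
\]
Hence, by the hypothesis on $T$ and the contour estimate, $\|[M_B,T]\|\lesssim r^{-1}\sup_{|z|=r}\|T\|_{L^p(W_z^{p/q})\to L^q(W_z)}$, and everything comes down to the bound $\Apq{W_z}\lesssim\Apq{W}$ for $|z|\le r$ with $r\approx\|B\|_{\BMO_{W,W}^{p,q}}^{-1}$, which delivers \eqref{RoyJoshBloomUpper} with the stated dependence. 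Estimating $\Apq{W_z}$ reduces --- through the factorization
\[
W_z^{1/q}(x)W_z^{-1/q}(y)=\big[W^{1/q}(x)e^{zB(x)}W^{-1/q}(x)\big]\big[W^{1/q}(x)W^{-1/q}(y)\big]\big[W^{1/q}(y)e^{-zB(y)}W^{-1/q}(y)\big]
\]
and the $\text{A}_{p,q}$ condition on $W$ for the middle factor --- to controlling cube averages of the two exponential factors, i.e.\ to an exponential-integrability (matrix John--Nirenberg) self-improvement of the $\BMOVUpq$ condition with small constant. Establishing this despite the non-commutativity --- one cannot anchor $e^{zB(x)}$ against $e^{z\,m_QB}$ --- is, I expect, the main obstacle; it is the purpose of Section~\ref{JNFracSec}.

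For the lower bound I would argue one cube at a time. Fix a cube $Q$ nearly realizing $\|B\|_{\BMOVUpq}$, with reducing matrix $\mathcal W_Q$, and a companion cube $Q'$ of comparable size separated from $Q$. Testing $[M_B,T]$ against $\vec f=\mathbf 1_{Q'}\mathcal W_Q^{-1}\vec e$ for a unit vector $\vec e$ and pairing on $Q$, the axiom \eqref{LBO} --- which, after reducing to weights essentially constant at scale $\ell(Q)$, supplies the needed quantitative lower bound for $T$ applied to indicators --- gives $T\vec f\approx|Q|^{\alpha/d}\mathbf 1_Q\mathcal W_Q^{-1}\vec e$ and $T(B\vec f)\approx|Q|^{\alpha/d}\mathbf 1_Q(m_{Q'}B)\mathcal W_Q^{-1}\vec e$ on $Q$, hence $[M_B,T]\vec f\approx|Q|^{\alpha/d}\mathbf 1_Q(B(\cdot)-m_QB)\mathcal W_Q^{-1}\vec e$ there, up to errors absorbable into $\|B\|_{\BMOVUpq}$. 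Pairing against a dual test function on $Q$ adapted to $\mathcal W_Q$, integrating in $x$, and optimizing over $\vec e$ then produces a lower bound by a \emph{vector} form of the $\BMOVUpq$ norm, built from $\sup_{|\vec e|=1}\fint_Q|\mathcal W_Q(B(x)-m_QB)\mathcal W_Q^{-1}\vec e|\,dx$. The two delicate points are: converting \eqref{LBO} into this pointwise testing lower bound for $T$ with errors small enough to absorb; and the matrix-valued oscillation --- a single $\vec e$ need not detect the operator norm $\|\mathcal W_Q(B(x)-m_QB)\mathcal W_Q^{-1}\|$ for all $x\in Q$ --- which requires the equivalence (again from Section~\ref{JNFracSec}, via reducing matrices and John--Nirenberg, possibly after partitioning $Q$ into boundedly many subcubes) between the vector form and $\|B\|_{\BMOVUpq}$. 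I would model these steps on the scalar arguments of \cite{HRS} and the matrix arguments of \cite{IPT}.
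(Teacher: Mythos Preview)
Your block-diagonal reduction to a single weight is correct, but from that point on your plan diverges sharply from the paper and, for the upper bound, runs into a genuine obstacle that Section~\ref{JNFracSec} does \emph{not} resolve. Your Cauchy--Coifman--Rochberg--Weiss scheme requires, after the factorization you wrote, uniform control of averages of $\bigl\|e^{\,z\,W^{1/q}(x)B(x)W^{-1/q}(x)}\bigr\|$ over cubes for $|z|\lesssim\|B\|_{\BMO_{W,W}^{p,q}}^{-1}$; this is a \emph{matrix-weighted exponential John--Nirenberg} statement. Section~\ref{JNFracSec} (Theorem~\ref{JNFrac}) proves only the equivalence of several \emph{polynomial-exponent} BMO quantities (the conditions $(1)$--$(6)$ there), not any exponential self-improvement, and in the noncommutative setting such exponential integrability is not a routine consequence of the $L^q$-type bounds: you cannot center $e^{zB(x)}$ against $e^{z\,m_QB}$, and the conjugated symbol $W^{1/q}(x)B(x)W^{-1/q}(x)$ is neither self-adjoint nor normal, so even the submultiplicativity step $\|e^{A}\|\le e^{\|A\|}$ costs you all cancellation. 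Your proposal therefore defers precisely the hard step to a section that does not contain it.

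The paper avoids this entirely by encoding $B$ \emph{linearly} into a $2m\times 2m$ weight, not exponentially: with
\[
\Phi(x)=\begin{pmatrix}V^{1/q}(x)&V^{1/q}(x)B(x)\\ 0&U^{1/q}(x)\end{pmatrix},\qquad W=(\Phi^{*}\Phi)^{q/2},
\]
one has $\Phi T\Phi^{-1}$ upper-triangular with $V^{1/q}[M_B,T]U^{-1/q}$ as its off-diagonal block, so $\|[M_B,T]\|_{L^p(U^{p/q})\to L^q(V)}\le\|T\|_{L^p(W^{p/q})\to L^q(W)}\le\phi(\Apq{W})$; and since $\Phi(x)\Phi^{-1}(y)$ has blocks $V^{1/q}(x)V^{-1/q}(y)$, $U^{1/q}(x)U^{-1/q}(y)$, and $V^{1/q}(x)(B(x)-B(y))U^{-1/q}(y)$, one reads off directly $\Apq{W}\lesssim\Apq{U}+\Apq{V}+\|B\|_{\BMOVUTpq}^{q}$. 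No exponential, no John--Nirenberg beyond the polynomial equivalences of Theorem~\ref{JNFrac}, and the two-weight case is handled in one stroke rather than after a reduction. For the lower bound the \emph{same} $\Phi$ gives $\Apq{W}^{1/q}\approx(\Apq{U}+\Apq{V}+\|B\|_{\BMOVUTpq}^{q})^{1/q}\lesssim\|T\|_{L^p(W^{p/q})\to L^q(W)}$ by the axiom \eqref{LBO}, the right side splits into the commutator plus two finite one-weight terms, and the rescaling $B\mapsto rB$, $r\to\infty$ isolates $\|B\|_{\BMOVUTpq}$. This replaces your cube-by-cube testing (whose two ``delicate points'' you correctly flagged: extracting a pointwise testing bound from \eqref{LBO} alone is not immediate, and the vector-to-operator-norm passage is nontrivial) with a one-line homogeneity argument.
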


 Like in \cite{IPT},  we will use matrix weighted arguments inspired by \cite{GPTV} in the next section to prove Theorem \ref{BloomFrac} in terms of a weighted BMO quantity $\|B\|_{\BMOVUTpq}$ that is equivalent to $\|B\|_{\BMOVUpq}$ when $U$ and $V$ are matrix A${}_{p, q}$ weights (see Theorem \ref{JNFrac}) but is much more natural for more arbitrary matrix weights $U$ and $V$.  More precisely, define  \begin{equation} \|B\|_{\BMOVUTpq}^q  = \sup_{\substack{Q \subseteq \Rd \\ Q \text{ is a cube}} } \fint_Q \pr{\fint_Q \norm{V^\frac{1}{q} (x) (B(x) - B(y)) U^{-\frac{1}{q}}(y) }^{p'} \, dy }^\frac{q}{p'} \, dx. \label{BMOT} \end{equation}
 \noindent

 We will then give relatively short proofs of the following two results in Section \ref{IntSec}.

\begin{lem} \label{IntUBFrac} Let $T$ be any linear operator defined on scalar valued functions where its canonical $\Cn$ valued extension $T$ for any $n \in \N$ satisfies \begin{equation*} \|T \|_{L^p(W^\frac{p}{q}) \rightarrow L^q(W)} \leq  \phi (\Apq{W}) \end{equation*} for some positive increasing function $\phi$ (possibly depending on $T, d, n, p, q$.)   If $U, V$ are $m \times m$ matrix A${}_{p, q}$ weights and $B$ is a locally integrable $m \times m$  matrix valued function for some $m \in \N$, then   \begin{equation*} \|[M_B, T ]\|_{L^p(U^{\frac{p}{q}}) \rightarrow L^q(V)} \leq   \|B\|_{\BMOVUTpq} \phi\pr{3^{\frac{q}{p' }} \pr{\Apq{U} + \Apq{V} } + 1}  \end{equation*}\end{lem}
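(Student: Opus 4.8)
The plan is to deduce this two--weight commutator estimate from the \emph{single}-weight boundedness of $T$ assumed in the hypothesis, applied to an augmented $2m\times 2m$ matrix weight $W$ manufactured out of $U$, $V$, and $B$; this is the fractional analogue of the change--of--weight device of \cite{IPT} (itself in the spirit of \cite{GPTV}), and notably it needs no kernel or sparse domination for $T$. Since $[M_{cB},T]=c\,[M_B,T]$ and $\norm{cB}_{\BMOVUTpq}=\abs c\,\norm{B}_{\BMOVUTpq}$ for scalars $c$, we may first normalize so that $\norm{B}_{\BMOVUTpq}=1$ (if $\norm{B}_{\BMOVUTpq}=0$ then $B$ is a.e.\ constant and $[M_B,T]=0$).

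Second, introduce the matrix weight $W$ on $\Rd$ through a reducing--type matrix:
\begin{equation*}
\Lambda(x)=\begin{pmatrix} V^{1/q}(x) & -V^{1/q}(x)B(x)\\ 0 & U^{1/q}(x)\end{pmatrix},\qquad W(x):=\bigl(\Lambda(x)^{*}\Lambda(x)\bigr)^{q/2}.
\end{equation*}
Then $W$ is genuinely positive definite a.e., $\abs{W^{1/q}(x)\vec v}=\abs{\Lambda(x)\vec v}$ for all $\vec v\in\mathbb C^{2m}$, and, writing the polar decomposition $\Lambda(x)=O(x)\,W^{1/q}(x)$ with $O(x)$ unitary, $\norm{W^{1/q}(x)W^{-1/q}(y)}=\norm{\Lambda(x)\Lambda(y)^{-1}}$. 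Multiplying out gives the block--triangular identity
\begin{equation*}
\Lambda(x)\Lambda(y)^{-1}=\begin{pmatrix} V^{1/q}(x)V^{-1/q}(y) & -V^{1/q}(x)\bigl(B(x)-B(y)\bigr)U^{-1/q}(y)\\ 0 & U^{1/q}(x)U^{-1/q}(y)\end{pmatrix}.
\end{equation*}
Estimating the operator norm of this matrix by those of its three blocks and organizing the resulting averages over the three pieces --- whose contributions, by \eqref{MatrixApqDef} and \eqref{BMOT} together with the normalization, are at most $\Apq{V}$, $\norm{B}_{\BMOVUTpq}^{q}=1$, and $\Apq{U}$ --- one checks that $\Apq{W}\le 3^{q/p'}\bigl(\Apq{U}+\Apq{V}\bigr)+1<\infty$. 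In particular $W$ is a matrix $A_{p,q}$ weight, so by hypothesis $\norm{T}_{L^p(W^{p/q})\to L^q(W)}\le\phi\bigl(\Apq{W}\bigr)$.

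Third, given a $\mathbb C^m$-valued $\vec h$, set $\vec F:=\begin{pmatrix}B\vec h\\ \vec h\end{pmatrix}$, a $\mathbb C^{2m}$-valued function (one may take $\vec h$ in a dense class and pass to the general case, but in fact $\vec F\in L^p(W^{p/q})$ already, so $T\vec F$ is always defined). Two short computations with $\Lambda$ close the argument. On the one hand $\Lambda(x)\vec F(x)=\begin{pmatrix}0\\ U^{1/q}(x)\vec h(x)\end{pmatrix}$, so $\norm{\vec F}_{L^p(W^{p/q})}=\norm{\vec h}_{L^p(U^{p/q})}$. On the other hand, since $T$ acts componentwise, $T\vec F=\begin{pmatrix}T(B\vec h)\\ T\vec h\end{pmatrix}$, and hence
\begin{equation*}
\Lambda(x)(T\vec F)(x)=\begin{pmatrix}-V^{1/q}(x)\,\bigl([M_B,T]\vec h\bigr)(x)\\ U^{1/q}(x)\,(T\vec h)(x)\end{pmatrix},
\end{equation*}
which forces the pointwise bound $\abs{V^{1/q}(x)\bigl([M_B,T]\vec h\bigr)(x)}\le\abs{W^{1/q}(x)(T\vec F)(x)}$. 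Combining and using that $\phi$ is increasing,
\begin{equation*}
\norm{[M_B,T]\vec h}_{L^q(V)}\le\norm{T\vec F}_{L^q(W)}\le\phi\bigl(\Apq{W}\bigr)\norm{\vec F}_{L^p(W^{p/q})}\le\phi\Bigl(3^{q/p'}\bigl(\Apq{U}+\Apq{V}\bigr)+1\Bigr)\norm{\vec h}_{L^p(U^{p/q})},
\end{equation*}
and undoing the normalization of $B$ yields the stated estimate.

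The only step requiring real work is the second one: verifying that $\Apq{W}$ is dominated by $3^{q/p'}(\Apq{U}+\Apq{V})+1$. Everything else is linear algebra once $W$ is set up correctly, but that bound calls for the block--matrix operator--norm estimate together with careful bookkeeping of the exponents $p'$ and $q/p'$ (and of the three summands $V^{1/q}(x)V^{-1/q}(y)$, $V^{1/q}(x)(B(x)-B(y))U^{-1/q}(y)$, $U^{1/q}(x)U^{-1/q}(y)$) in order to land the constant in the form stated; I expect this to be the main obstacle.
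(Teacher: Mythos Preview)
Your proof is correct and follows essentially the same route as the paper: both build the $2m\times 2m$ weight $W=(\Lambda^*\Lambda)^{q/2}$ from the block upper--triangular matrix $\Lambda$ (the paper's $\Phi$ differs only by the sign of $B$), estimate $\Apq{W}$ via the three blocks of $\Lambda(x)\Lambda(y)^{-1}$, and then rescale $B$. The only stylistic difference is that the paper reads off the commutator as the $(1,2)$ block of the operator $\Phi T\Phi^{-1}$, whereas you extract it by applying $T$ to the specific test vector $\vec F=(B\vec h,\vec h)^{T}$; these are the same computation. One small caveat on the constant you flagged: the straightforward block estimate $\norm{\Lambda(x)\Lambda(y)^{-1}}^{p'}\le 3^{p'-1}(\cdots)$ followed by $(\,\cdot\,)^{q/p'}$ and averaging naturally gives $\Apq{W}\le 3^{q/p'}\bigl(\Apq{U}+\Apq{V}+1\bigr)$ after normalization, which is exactly what the paper's proof obtains; landing the ``$+1$'' outside the factor $3^{q/p'}$ as in the stated lemma is not what the routine bound delivers, so do not worry about matching that precise form.
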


\begin{lem} \label{IntLBFrac}  If $T$ is any fractional lower bound operator then for any $m \times m$ matrix A${}_{p, q}$ weights $U, V$ and an $m \times m$ matrix symbol $B$ we have \begin{equation*} \|B\|_{\BMOVUTpq}  \lesssim   \|[M_B, T]\|_{L^p(U^{\frac{p}{q}}) \rightarrow L^q(V)}\end{equation*}  where the bound depends possibly on $n, p, d$ and $T$ but is independent of $U$ and $V$.  \end{lem}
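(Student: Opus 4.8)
The plan is to adapt the lower-bound scheme of \cite{IPT} (which in turn goes back to \cite{GPTV}) to the fractional scaling $\alpha/d + 1/q = 1/p$. The goal is to control, for each fixed cube $Q$, the quantity
\begin{equation*}
\fint_Q \pr{\fint_Q \norm{V^\frac{1}{q}(x)(B(x) - B(y)) U^{-\frac{1}{q}}(y)}^{p'} \, dy}^\frac{q}{p'} \, dx
\end{equation*}
by a constant multiple of $\|[M_B, T]\|_{L^p(U^{p/q}) \to L^q(V)}^q$. First I would reduce, by a standard duality/convexity argument, the inner $L^{p'}(\fint_Q \, dy)$-norm of the matrix $V^{1/q}(x)(B(x)-B(y))U^{-1/q}(y)$ to a pairing against a fixed unit vector: for a.e.\ $x$ choose a unit vector $\V{e}(x) \in \Cn$ so that the inner integral is comparable to $\pr{\fint_Q \abs{U^{-1/q}(y)(B(x)-B(y))^* V^{1/q}(x) \V{e}(x)}^{p'} \, dy}^{1/p'}$, and then absorb $V^{1/q}(x)\V{e}(x)$ by testing against suitable scalar functions. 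The point of the fractional lower bound hypothesis \eqref{LBO} is precisely that it converts the averaged operator-norm expression defining $\Apq{W}$ into boundedness of $T$, and this is what one applies to an auxiliary matrix weight built from $U$, $V$, and $B$ on the cube $Q$.

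The key steps, in order: (1) Fix a cube $Q$ and, as in \cite{IPT}, construct test functions supported near $Q$ adapted to the (constant-matrix) reducing operators $\langle U \rangle_Q$, $\langle V \rangle_Q$ of $U^{p/q}$ and $V$ — concretely, functions of the form $\V{f} = (U^{p/q})^{-1/p}\1_{Q'} \V{c}$ for vectors $\V{c}$ and a translate $Q'$ of $Q$ disjoint from $Q$ but comparable in size, so that on $Q$ the kernel of $I_\alpha$ (or of the general fractional lower bound operator $T$) is essentially constant of size $\abs{Q}^{\alpha/d - 1} = \abs{Q}^{-1/q + 1/p}$; this is the mechanism by which the fractional scaling enters and is the analogue of the "far cube" trick in the CZO case. (2) Expand $[M_B, T]\V{f}(x) = \int (B(x) - B(y)) k(x,y) \V{f}(y)\, dy$ on $Q$, use the near-constancy of $k$ to replace it by $\abs{Q}^{\alpha/d-1}$ up to controlled error, and thereby bound $\fint_Q \abs{V^{1/q}(x)(B(x) - m_{Q'}B)U^{-1/q}(y_0)\V{c}}$-type averages by $\abs{Q}^{-\alpha/d} \, \|[M_B,T]\V{f}\|_{L^q(V)}\,\|\V{f}\|_{L^p(U^{p/q})}^{-1} \cdot \|\V{f}\|_{L^p(U^{p/q})}$. (3) Handle the difference between $B(x)-B(y)$ and $B(x)-m_{Q'}B$ (equivalently, pass from a "two-variable" to a "one-variable" BMO expression) using the matrix $A_{p,q}$ hypothesis on $U$ and $V$ to bound $\fint_Q \fint_Q \|V^{1/q}(x)(m_{Q'}B - \text{stuff})U^{-1/q}(y)\| $ — this is exactly where Lemma \ref{IntLBFrac}'s restriction to $A_{p,q}$ weights (rather than arbitrary ones) is used, since one needs $\langle U\rangle_Q$, $\langle V\rangle_Q$ to be genuine reducing matrices comparable to the averages. (4) Optimize over the test vectors $\V{c}$ and unit vectors $\V{e}(x)$, and take the supremum over $Q$.

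The main obstacle I anticipate is step (1)–(2): getting the bookkeeping of the fractional kernel right so that the powers of $\abs{Q}$ match up exactly. In the CZO case one uses $|k(x,y)| \sim \abs{Q}^{-1}$ on a far cube and the $L^p(u) \to L^p(v)$ norm; here the kernel is $\abs{x-y}^{\alpha-d} \sim \abs{Q}^{(\alpha-d)/d}$, and the target/source spaces are $L^p(U^{p/q}) \to L^q(V)$ with $p \ne q$, so the homogeneity of the test function (its $L^p(U^{p/q})$ norm scales like $\abs{Q'}^{1/p}$ times a matrix factor) must be reconciled with the $L^q$ norm on the output and the $\abs{Q}^{\alpha/d}$ gain; the identity $\alpha/d = 1/p - 1/q$ is exactly what makes this work, and the delicate part is verifying that no stray power of $\abs{Q}$ survives. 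A secondary technical point is ensuring the lower bound hypothesis \eqref{LBO} is invoked with a matrix weight for which $\Apq{W}$ is genuinely comparable to the local quantity being estimated; this requires the now-standard observation that one may enlarge the matrix dimension and encode the relevant $U$, $V$, $B$ data into a single block weight, as in \cite{IPT}.
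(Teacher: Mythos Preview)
Your proposal conflates two different arguments. Steps (1)--(2) rely on $T$ having a kernel with a specific size and near-constancy on far cubes, but a \emph{fractional lower bound operator} is defined purely by the inequality \eqref{LBO} together with finiteness on matrix $\text{A}_{p,q}$ weights; no kernel structure is assumed. So the ``far cube'' testing scheme you outline cannot be carried out for the general $T$ in the statement. (Kernel analysis is what the paper uses, separately, in Section~\ref{WienerSection} to show that $I_\alpha$ \emph{is} a fractional lower bound operator --- a different task.) There is also a disconnect in your plan: you say \eqref{LBO} will be applied to an auxiliary block weight, but your ``key steps'' never actually invoke \eqref{LBO}; they bypass it with direct kernel computations.

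The block-weight construction you relegate to a ``secondary technical point'' is in fact the entire proof, and the step you are missing is a rescaling trick. With $\Phi$ and $W = (\Phi^*\Phi)^{q/2}$ exactly as in Subsection~\ref{IntUBFracSec}, one has $\Apq{W} \approx \Apq{U} + \Apq{V} + \|B\|_{\BMOVUTpq}^q$, while $\|T\|_{L^p(W^{p/q})\to L^q(W)} = \|\Phi T\Phi^{-1}\|_{L^p\to L^q}$ is controlled by the sum of the three block entries: the commutator term $\|[M_B,T]\|_{L^p(U^{p/q})\to L^q(V)}$ and the two diagonal terms $\|T\|_{L^p(U^{p/q})\to L^q(U)}$, $\|T\|_{L^p(V^{p/q})\to L^q(V)}$. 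Applying \eqref{LBO} to $W$ therefore gives
\[
\bigl(\Apq{U} + \Apq{V} + \|B\|_{\BMOVUTpq}^q\bigr)^{1/q} \lesssim \|[M_B,T]\|_{L^p(U^{p/q})\to L^q(V)} + (\text{two diagonal terms}).
\]
The diagonal terms are finite because $U,V\in\text{A}_{p,q}$ (this is the actual role of the $\text{A}_{p,q}$ hypothesis here, not the BMO equivalence you cite in step~(3)), but they depend on $U,V$. To remove them, replace $B$ by $rB$ for $r>0$, divide by $r$, and let $r\to\infty$: the commutator and the BMO norm are homogeneous of degree one in $B$, while the diagonal terms are independent of $B$ and hence vanish in the limit. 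No cube-by-cube testing, reducing matrices, or kernel estimates are needed.
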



 As in \cite{IPT}, we will prove that the fractional integral operator is a fractional lower bound operator in Section \ref{WienerSection} by utilizing the Schur multiplier/Wiener algebra ideas from \cite{LT}, and thus recover \eqref{RoyJoshBloomLower}.  These arguments will in fact prove the following (see \cite{IPT} for an analogous result with respect to the Riesz transforms).  Here, for ease of notation, we set $U' = U^{-\frac{p'}{q}}$ and $V' = V^{-\frac{p'}{q}}$.

\begin{thm} \label{FracLowerBoundThm} Let $U$ and $V$ be any (not necessarily A${}_p$) matrix weights.  If $B$ is any locally integrable $m \times m$  matrix valued function then \begin{equation} \max\left\{\|B\|_{\BMOVUTpq}, \|B\|_{\BMOVUTpqd}\right\}\lesssim   \|[M_B, I_\al]\|_{L^p(U^\frac{p}{q} ) \rightarrow L^q(V)}. \label{RieszThmA}\end{equation} \noindent
\end{thm}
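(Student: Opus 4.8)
The plan is to localise to a single cube, use the Wiener algebra to strip the kernel of $I_\al$ off a well-separated pair of cubes (the Schur-multiplier idea of \cite{LT}, as used for the Riesz transforms in \cite{IPT}), and then test the resulting kernel-free difference operator.

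\emph{Reduction to a cube pair.} Fix a cube $Q\subseteq\Rd$ and let $\wt Q:=Q+3\ell(Q)e_1$, so that $\wt Q\cap Q=\emptyset$ and $\operatorname{dist}(Q,\wt Q)\approx\ell(Q)$. Using the reducing-matrix description of $\BMOVUTpq$ and $\BMOVUTpqd$ from Section~\ref{JNFracSec} (reducing matrices exist for arbitrary matrix weights, and those of comparable cubes are equivalent with universal constants), it suffices to bound, uniformly over all such $Q$,
\begin{equation*}
\Phi(Q):=\left(\fint_Q\left(\fint_{\wt Q}\norm{V^{\frac1q}(x)\big(B(x)-B(y)\big)U^{-\frac1q}(y)}^{p'}\,dy\right)^{\frac{q}{p'}}dx\right)^{\frac1q}
\end{equation*}
and the analogous quantity $\Phi^*(Q)$ (the order of integration and the exponents $q,p'$ dualised) by $\lesssim\norm{[M_B,I_\al]}_{L^p(U^\frac pq)\to L^q(V)}$.

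\emph{Peeling off the kernel.} For $x\in Q,\ y\in\wt Q$ one has $\abs{x-y}^{\al-d}=\ell(Q)^{\al-d}\kappa(x-y)$ with $\kappa(z):=\big(\abs{z}/\ell(Q)\big)^{\al-d}$ smooth and bounded above and below by positive constants, since $\{(x-y)/\ell(Q):x\in Q,\ y\in\wt Q\}$ is a fixed compact annular region away from the origin. Extend $\kappa$ to a smooth, nowhere-vanishing, $\ell(Q)\Z^d$-periodic function; by Wiener's lemma $\kappa=\sum_{k\in\Z^d}a_ke_k$ and $1/\kappa=\sum_{k\in\Z^d}b_ke_k$ with $e_k(z)=e^{2\pi ik\cdot z/\ell(Q)}$ and $\sum_k\abs{a_k},\ \sum_k\abs{b_k}<\infty$, both independent of $\ell(Q)$ by scaling. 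Using $[M_B,I_\al]\v g(x)=\int_{\Rd}(B(x)-B(y))\abs{x-y}^{\al-d}\v g(y)\,dy$, the factorisation $e_k(x-y)=e_k(x)\overline{e_k(y)}$, and $\kappa\cdot(1/\kappa)\equiv1$, a short computation gives the operator identity
\begin{equation*}
\sum_{k\in\Z^d}b_k\,M_{e_k}\Big(\1_Q\,[M_B,I_\al]\big(\1_{\wt Q}M_{\overline{e_k}}\v f\big)\Big)=\ell(Q)^{\al-d}\,D_Q\v f,\qquad D_Q\v f(x):=\1_Q(x)\int_{\wt Q}\big(B(x)-B(y)\big)\v f(y)\,dy,
\end{equation*}
where $M_\varphi$ denotes multiplication by $\varphi$. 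Since multiplication by a unimodular function is an isometry on $L^p(U^\frac pq)$ and on $L^q(V)$, and $\1_Q$ has norm at most $1$ on $L^q(V)$, we obtain
\begin{equation*}
\norm{D_Q}_{L^p(U^\frac pq)\to L^q(V)}\le\Big(\textstyle\sum_k\abs{b_k}\Big)\,\ell(Q)^{d-\al}\,\norm{[M_B,I_\al]}_{L^p(U^\frac pq)\to L^q(V)},
\end{equation*}
with $\sum_k\abs{b_k}$ depending only on $d$ and $\al$. The point is that this step uses nothing about $U,V$ beyond being matrix weights — in particular it sidesteps the matrix $A_{p,q}$ boundedness of $I_\al$ invoked in Lemma~\ref{IntLBFrac}.

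\emph{Testing the difference operator.} It remains to prove $\Phi(Q)\lesssim\ell(Q)^{\al-d}\norm{D_Q}_{L^p(U^\frac pq)\to L^q(V)}$ (and the analogue for $\Phi^*(Q)$); combining with the previous display the powers of $\ell(Q)$ cancel — a scaling check against \eqref{HRSBloom} confirms the exponents $1/p-1/q=\al/d$ work out — and Theorem~\ref{FracLowerBoundThm} follows. Testing $D_Q$ on $\v f=\1_{\wt Q}U^{-\frac1q}\v g$ gives $\norm{\v f}_{L^p(U^\frac pq)}=\norm{\v g}_{L^p(\wt Q)}$ and $V^{\frac1q}(x)D_Q\v f(x)=\1_Q(x)\int_{\wt Q}V^{\frac1q}(x)(B(x)-B(y))U^{-\frac1q}(y)\,\v g(y)\,dy$, so one must choose $\v g$ realising the pointwise matrix norm $\norm{V^{\frac1q}(x)(B(x)-B(y))U^{-\frac1q}(y)}$ for almost every $x$ and in an $L^{p'}_y$-averaged sense simultaneously. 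As in \cite{GPTV,IPT}, this is done via a measurable selection of near-extremal unit vectors together with a stopping-time/sparse decomposition of $\wt Q$ on which $B$ and $U^{-1/q}$ are essentially constant, so that on each stopping cube the inner integral does not cancel; summing over stopping cubes gives the bound for $\Phi(Q)$. The dual quantity $\Phi^*(Q)$, hence $\norm{B}_{\BMOVUTpqd}$, is handled by the same scheme after passing to the adjoint, using $[M_B,I_\al]^*=-[M_{B^*},I_\al]$, $I_\al^*=I_\al$, and the matrix-weighted duality $\big(L^r(W)\big)^*\cong L^{r'}(W^{-r'/r})$, with the same bookkeeping as in \cite{IPT}.

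\emph{Main obstacle.} I expect the testing step to be the technical heart: $D_Q$ only sees $\v f$ through the two moments $\int_{\wt Q}\v f$ and $\int_{\wt Q}B\v f$, so extracting the full $L^{p'}$-averaged operator-norm BMO quantities from it — and in particular organising the stopping-time argument so as to defeat cancellation in the inner integral for arbitrary (not $A_{p,q}$) matrix weights — is where the real work lies. The role of $I_\al$ itself, via the Wiener algebra, is only to reduce cleanly to $D_Q$, and the reduction and Wiener-algebra steps run close to \cite{IPT} once the fractional homogeneity $\ell(Q)^{\al-d}$ is tracked.
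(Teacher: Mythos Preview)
Your Wiener-algebra step peeling the kernel on separated cubes is essentially correct, but the proposal has two genuine gaps, and the paper's route is different.

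\textbf{The separated-cube reduction fails for arbitrary weights.} You pass from the diagonal quantity in \eqref{BMOT} (both integrals over $Q$) to the off-diagonal $\Phi(Q)$ over $Q\times\wt Q$, invoking that ``reducing matrices \ldots\ of comparable cubes are equivalent with universal constants''. That equivalence is an $\text A_\infty$-type doubling statement and is simply false for general matrix weights; without it there is no relation between $\MC U_Q'$ and $\MC U_{\wt Q}'$, and the reduction collapses. (The reducing-matrix equivalences in Section~\ref{JNFracSec} all assume $U,V\in\text A_{p,q}$.) The paper never separates the cubes: Proposition~\ref{prop:wein} shows $|x|^{d-\alpha}\phi(x)\in W_0(\Rd)$ for $\phi\in C_c^\infty$ supported across the origin, so the Wiener-algebra trick works \emph{through} the diagonal and produces the rank-one averaging operator $A_E$ on a single set $E$ (Lemma~\ref{thm-main-lower-bound}).

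\textbf{The testing step is not a proof.} As you note, $D_Q$ factors through the two moments $\int_{\wt Q}\vec f$ and $\int_{\wt Q}B\vec f$, so its operator norm is governed by a $2m$-dimensional image, whereas $\Phi(Q)$ is the full mixed $L^q_xL^{p'}_y$ norm of the kernel. For each $x$ the near-extremal direction of $\vec g(y)$ depends on $x$ through $V^{1/q}(x)B(x)$ and $V^{1/q}(x)$, so no single test function works; and a stopping-time decomposition of $\wt Q$ ``on which $B$ and $U^{-1/q}$ are essentially constant'' has no Carleson/sparseness control when $U\notin\text A_{p,q}$, so the pieces cannot be summed. The references to \cite{GPTV,IPT} do not help here: those papers do not test a difference operator but already use the block-matrix device below.

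\textbf{What the paper actually does.} It reruns the $2m\times 2m$ block-matrix trick of Section~\ref{IntUBFracSec}: with $W=(\Phi^*\Phi)^{q/2}$ one has $\norm{W}_{\text A_{p,q}(E)}\approx\norm{U}_{\text A_{p,q}(E)}+\norm{V}_{\text A_{p,q}(E)}+\|B\|^q_{\BMOVUTpq(E)}$, and Lemma~\ref{thm-main-lower-bound} gives $\norm{\MC W_E'\MC W_E}\lesssim\norm{\chi_EI_\alpha\chi_E}_{L^p(W^{p/q})\to L^q(W)}$, the right side being (by the block structure) at most the commutator norm plus the two one-weight terms $\norm{\chi_EI_\alpha\chi_E}$ for $U$ and for $V$ separately. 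For arbitrary $U,V$ those one-weight terms need not be finite; the paper truncates to $E_M=\{x:\norm{U^{\pm1}(x)},\norm{V^{\pm1}(x)}<M\}$, where they are finite by unweighted Hardy--Littlewood--Sobolev, then rescales $B\mapsto rB$, divides by $r$, and sends $r\to\infty$ to kill them; Fatou in $M$ recovers the full BMO norm. The block-matrix embedding is exactly what converts the infinite-dimensional BMO quantity into an $\text A_{p,q}$ characteristic that a \emph{rank-one} averaging can detect --- this is the idea your outline is missing.
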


\noindent Note that the two quantities $\|B\|_{\BMOVUTpq}$ and $ \|B\|_{\BMOVUTpqd}$ are equivalent when $U, V \in \T{A}_{p, q}$ (which will be proved in Section \ref{WienerSection}) and in general should be thought of as ``dual" matrix weighted BMO quantities.   Finally, we will show that an Orlicz ``bumped" version of these conditions are sufficient for the general two matrix weighted boundedness of fractional integral operators. In particular, we will prove the following result in Section \ref{OrliczUpBound} (see \cite{IPT} for an analogous result for Calderon-Zygmund operators)

\begin{prop} \label{FracOrliczProp} Let  $U$ and $V$ be any $m \times m$ matrix weights, and suppose that $C$ and $D$ are Young
   functions with $\bar{D}\in B_{p,q}$ and $\bar{C}\in B_{q'}$.

 Then \begin{equation*} \|[M_B, I_{\al}]\|_{L^p(U^\frac{p}{q}) \rightarrow L^q(V)} \lesssim \min\{\kappa_1, \kappa_2\} \end{equation*}  where \[ \begin{split}\kappa_{1} & = \sup_{Q}\|\|V^\frac{1}{q} (x) (B(x)-B(y)) U^{-\frac{1}{q}} (y) \|_{C_{x},Q}\|_{D_{y},Q}\\ \kappa_{2} & =\sup_{Q}\|\|V^\frac{1}{q} (x) (B(x)-B(y)) U^{-\frac{1}{q}} (y) \|_{D_{y},Q}\|_{C_{x},Q} \end{split} \]

\end{prop}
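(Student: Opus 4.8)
The plan is to prove the bound for $\kappa_1$; the bound for $\kappa_2$ follows by an entirely symmetric argument (interchanging the roles of the two variables and the two Young functions, using $\bar D \in B_{p,q}$ and $\bar C \in B_{q'}$ in the corresponding places). First I would write the commutator kernel pointwise: for $\vec f \in L^p(U^{p/q})$,
\[
|[M_B, I_\al]\vec f(x)| \le \int_{\Rd} \frac{\|B(x) - B(y)\|\, |\vec f(y)|}{|x-y|^{d-\al}}\,dy,
\]
and then, inserting the identity $U^{-1/q}(y) U^{1/q}(y)$ and multiplying through by $V^{1/q}(x)$, estimate
\[
|V^{1/q}(x)[M_B,I_\al]\vec f(x)| \le \int_{\Rd} \frac{\|V^{1/q}(x)(B(x)-B(y))U^{-1/q}(y)\|\,|U^{1/q}(y)\vec f(y)|}{|x-y|^{d-\al}}\,dy.
\]
Call the scalar kernel weight $\Phi(x,y) = \|V^{1/q}(x)(B(x)-B(y))U^{-1/q}(y)\|$ and set $\vec g = U^{1/q}\vec f$, so that $\|\vec g\|_{L^p} = \|\vec f\|_{L^p(U^{p/q})}$ and it suffices to bound the scalar bilinear form $\int_{\Rd}\int_{\Rd} \Phi(x,y)|x-y|^{-(d-\al)}|\vec g(y)|\,|h(x)|\,dy\,dx$ by $\kappa_1\|\vec g\|_p\|h\|_{q'}$ for all $h \in L^{q'}$.

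The key step is to run this through a sparse/dyadic decomposition of the fractional kernel. By the standard sparse domination of $I_\al$ (or by dyadic decomposition of $|x-y|^{-(d-\al)}$ into a sum over dyadic grids and cubes $Q$ with a factor $\ell(Q)^{\al-d}\cdot\ell(Q)^d = \ell(Q)^{\al}$), it suffices to bound, for a sparse family $\mathcal S$,
\[
\sum_{Q \in \mathcal S} \ell(Q)^{\al} \fint_Q \fint_Q \Phi(x,y)|\vec g(y)|\,|h(x)|\,dy\,dx \cdot |Q|.
\]
On each cube I would apply the generalized Hölder inequality in Orlicz spaces three times: first in $y$, splitting $\Phi(x,y)|\vec g(y)|$ using the pair $(C, \bar C)$ — wait, more carefully, I split $\Phi$ in the $y$ variable against $\vec g$ using a pair $(D,\bar D)$, then in the $x$ variable I am left with $\|\Phi(x,\cdot)\|_{\bar D_y, Q}$ times $|h(x)|$, which I split using $(C,\bar C)$. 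This produces the three-fold term
\[
\|\|\Phi\|_{D_y,Q}\|_{C_x,Q}\cdot \|\vec g\|_{\bar D_y, Q}\cdot \|h\|_{\bar C_x, Q}
\]
up to constants — and the first factor is exactly (the $\kappa_1$-type) $\sup_Q\|\|V^{1/q}(x)(B(x)-B(y))U^{-1/q}(y)\|_{D_y,Q}\|_{C_x,Q} \le \kappa_1$ (here I need the outer norm on $x$ and inner on $y$, which matches $\kappa_1$; choosing the other order of Hölder applications gives $\kappa_2$). So the sparse sum is controlled by
\[
\kappa_1 \sum_{Q \in \mathcal S} |Q|\, \ell(Q)^{\al}\, \|\vec g\|_{\bar D_y, Q}\, \|h\|_{\bar C_x, Q}.
\]

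The final step — and the main obstacle — is to sum this sparse expression against $\|\vec g\|_p\|h\|_{q'}$. This is where the $B_{p,q}$ and $B_{q'}$ hypotheses enter: $\bar D \in B_{p,q}$ says precisely that the fractional Orlicz maximal operator $M_{\bar D, \al}$ associated to $\bar D$ (with the $\ell(Q)^{\al}$ gain appropriately distributed) is bounded $L^p \to L^q$, and $\bar C \in B_{q'}$ says the Orlicz maximal operator $M_{\bar C}$ is bounded on $L^{q'}$. Using the sparseness to pass to disjoint subsets $E_Q \subset Q$ with $|E_Q| \gtrsim |Q|$, I would bound $\|\vec g\|_{\bar D_y, Q} \lesssim \inf_{E_Q} M_{\bar D,\al}^{(0)}\vec g$ (an Orlicz average without the scaling) and distribute the $\ell(Q)^\al$ factor; then Hölder on the sum with exponents dictated by $\al/d + 1/q = 1/p$ reduces everything to $\|M_{\bar D, \al}\vec g\|_{L^q}\,\|M_{\bar C}h\|_{L^{q'}} \lesssim \|\vec g\|_p\,\|h\|_{q'}$. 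The delicate bookkeeping is making sure the $\ell(Q)^\al = |Q|^{\al/d}$ power is split so that one piece ($|Q|^{1/q}$-worth relative to the $L^q$ side after accounting for $|E_Q|\gtrsim|Q|$) feeds the fractional maximal function and the estimates balance via the scaling relation; this is routine once the correct splitting is identified but is the technical heart of the argument. I would also remark that the matrix structure plays no further role past the reduction to the scalar kernel $\Phi$, so the scalar Orlicz bump machinery (as in the Cruz-Uribe–Moen type arguments and \cite{IPT}) applies verbatim.
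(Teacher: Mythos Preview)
Your approach is essentially the paper's: reduce the commutator kernel to the scalar weight $\Phi(x,y)=\|V^{1/q}(x)(B(x)-B(y))U^{-1/q}(y)\|$, dominate $|x-y|^{-(d-\al)}$ dyadically, apply generalized H\"older in Orlicz spaces on each cube to pull out $\kappa_i$, and close with the $B_{p,q}$ and $B_{q'}$ bounds for the Orlicz fractional maximal function $M_{\bar D}^{\al}$ and the Orlicz maximal function $M_{\bar C}$. Two small slips: after H\"older in $y$ the $\Phi$-factor carries the $D$-norm (not $\bar D$), and the iterated norm $\|\|\Phi\|_{D_y,Q}\|_{C_x,Q}$ you obtain is $\kappa_2$, not $\kappa_1$; this is harmless since the symmetric argument gives the other.

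The one place where your write-up is genuinely thinner than the paper is the phrase ``by the standard sparse domination of $I_\al$''. That black box does not apply here: the bilinear form carries the kernel $\Phi(x,y)\,|x-y|^{-(d-\al)}$, not the kernel of $I_\al$, so sparse domination of $I_\al$ acting on a single function cannot be invoked. What actually works is your parenthetical alternative: the pointwise dyadic decomposition $|x-y|^{-(d-\al)}\lesssim \sum_t\sum_{Q\in\D^t}\ell(Q)^{\al-d}\chi_Q(x)\chi_Q(y)$ produces a sum over \emph{all} dyadic cubes, and one must then build sparseness by a stopping-time on the level sets of $\|\vec g\|_{\bar D,Q}$. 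The paper does exactly this, and the key device that lets one collapse the full sum $\sum_{Q\subset P}|Q|^{1+\al/d}\|\vec h\|_{\bar C,Q}$ back down to $|P|^{1+\al/d}\|\vec h\|_{\bar C,P}$ is the equivalent Orlicz norm $\|f\|_{\Phi,Q}^{*}=\inf_{s>0}\{s+s\fint_Q\Phi(|f|/s)\}$ (Proposition~\ref{OrliczProp}), which is subadditive over disjoint cubes of a fixed scale. That lemma is precisely the ``delicate bookkeeping'' you flag as the technical heart; once it is in place your final paragraph goes through verbatim.
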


\noindent We refer the reader to Section $5.2$ in \cite{CIM} for the standard Orlicz space related definitions used in the statement of Proposition \ref{FracOrliczProp}.

 It is important to emphasize that Theorem \ref{FracLowerBoundThm} and Proposition \ref{FracOrliczProp} are new,  even in the scalar setting of a single weight.

\section{Intermediate fractional upper and lower bounds} \label{IntSec}

As stated in the introduction, we will give short proofs of Lemma \ref{IntUBFrac} and Lemma \ref{IntLBFrac} in this section, beginning with Lemma \ref{IntUBFrac}.

\subsection{Proof of Lemma \ref{IntUBFrac}} \label{IntUBFracSec}

Define the $2 \times 2$ block matrix-valued function $\Phi = \mymor{\Phi_{U,V,B}}{\reals^d}{M_{2m}(\cplx)}$ by

\[
\Phi(x):=
\begin{pmatrix}
V^{\frac{1}{q}}(x) & 0 \\

0 & U^{\frac{1}{q}}(x)
\end{pmatrix}
\begin{pmatrix}
I & B(x) \\
0 & I
\end{pmatrix}
=
\begin{pmatrix}
V^{\frac{1}{q}}(x) & V^{\frac{1}{q}}(x)B(x) \\
0 & U^{\frac{1}{q}}(x)
\end{pmatrix},
\]
so that for a.e. $x \in \reals^d$,
\[
\Phi^{-1}(y)=
\begin{pmatrix}
V^{-\frac{1}{q}}(y) & -B(y)U^{-\frac{1}{q}}(y)  \\
0 & U^{-\frac{1}{q}}(y)
\end{pmatrix}.
\]
 Thus, we have
\[
\Phi T \Phi^{-1} =
\begin{pmatrix}
V^{\frac{1}{q}}TV^{-\frac{1}{q}} & V^{\frac{1}{q}}[M_B,T]U^{-\frac{1}{q}} \\
0 &U^{\frac{1}{q}}TU^{-\frac{1}{q}}
\end{pmatrix}.
\]
 Note that $W := \left(\Phi^*\Phi\right)^{\frac{q}{2}}$ is a matrix weight and, by polar decomposition, there exists a unitary a.e. matrix function $\mathscr{U}$ such that $\Phi(x) = \mathscr{U}(x)W^\frac{1}{q}(x)$. This gives us that
\begin{align}
    &\mynorm{T}_{L^p\left( W^\frac{p}{q} \right) \longrightarrow L^q(W)}
    \phantom{A}= \mynorm{W^\frac{1}{q}TW^{-\frac{1}{q}}}_{L^p \rightarrow L^q} \nonumber \\
    &\phantom{A}= \mynorm{\Phi T \Phi^{-1}}_{L^p \rightarrow L^q}
    \phantom{A}\approx \max \left\{ \mynorm{V^{\frac{1}{q}}TV^{-\frac{1}{q}}}_{L^p \rightarrow L^q}, \mynorm{V^{\frac{1}{q}}[M_B,T]U^{-\frac{1}{q}}}_{L^p \rightarrow L^q}, \mynorm{U^{\frac{1}{q}}TU^{-\frac{1}{q}}}_{L^p \rightarrow L^q} \right\}  \nonumber \\
    &\phantom{A}\geq \mynorm{V^{\frac{1}{q}}[M_B,T]U^{-\frac{1}{q}}}_{L^p \rightarrow L^q}
    \phantom{A}= \mynorm{[M_B, T]}_{L^p\left( U^\frac{p}{q} \right) \longrightarrow L^q(V)} \nonumber
\end{align}
Using the assumption in Lemma \ref{IntUBFrac} that $\mynorm{T}_{L^p\left( W^\frac{p}{q} \right) \longrightarrow L^q(W)} \lesssim \phi(\Apq{W})$ we get that
\begin{equation} \label{eq-comm-est}
    \mynorm{[M_B, T]}_{L^p\left( U^\frac{p}{q} \right) \longrightarrow L^q(V)} \lesssim \phi(\Apq{W}).
\end{equation}
Unravelling the $A_{p,q}$ condition for $W$, we obtain
\begin{align*}
    &\Apq{W} =\sup_Q \fint_Q \left( \fint_Q \mynorm{W^\frac{1}{q}(x) W^{-\frac{1}{q}}(y)}^{p'} \dy \right)^\frac{q}{p'} \dx \\
    &\phantom{A}=\sup_Q \fint_Q \left( \fint_Q \mynorm{\Phi(x) \Phi^{-1}(y)}^{p'} \dy \right)^\frac{q}{p'} \dx \\
    &\phantom{A}\leq 3^{\frac{q}{p'}} \pr{ \mynorm{U}_{A_{p,q}} + \mynorm{V}_{A_{p,q}} + \sup_Q \fint_Q \left( \fint_Q \mynorm{V^{\frac{1}{q}}(x) \left( B(x) - B(y)\right) U^{-\frac{1}{q}}(y)}^{p'} \dy \right)^\frac{q}{p'} \dx} \\
    &\phantom{A}  =  3^{\frac{q}{p'}} \pr{ \mynorm{U}_{A_{p,q}} + \mynorm{V}_{A_{p,q}} + \mynorm{B}^q_{\widetilde{\T{BMO}}^{p,q}_{V,U}}}
\end{align*}
and thus
\begin{equation*}
    \mynorm{[M_B, I_\alpha]}_{L^p\left( U^\frac{p}{q} \right) \longrightarrow L^q(V)} \lesssim \phi \pr{ 3^{\frac{q}{p'}}  \left( \mynorm{U}_{A_{p,q}} + \mynorm{V}_{A_{p,q}} + \mynorm{B}^q_{\widetilde{BMO}^{p,q}_{V,U}} \right)}.
\end{equation*}
Re-scaling with $B$ replaced by $B\mynorm{B}^{-1}_{\widetilde{BMO}^{p,q}_{V,U}}$ now completes the proof.


\subsection{Proof of Lemma \ref{IntLBFrac}} We now prove Lemma \ref{IntLBFrac}.  Let $W$ and $\Phi$ be defined as in the previous subsection, so that
  \begin{align*}  &\left(\Apq{U} + \Apq{V} + \|B\|_{\BMOVUTpq} ^q  \right)^\frac{1}{q}
   \approx \Apq{W} ^\frac{1}{q} \lesssim \mynorm{T}_{L^p\left( W^{\frac{p}{q}}\right) \longrightarrow L^q(W)} \nonumber \\
    &\phantom{W}\lesssim \mynorm{\left[M_B, T \right]}_{L^p\left( U^{\frac{p}{q}}\right) \longrightarrow L^q(V)} + \mynorm{T }_{L^p\left( U^{\frac{p}{q}}\right) \longrightarrow L^q(U)}
        + \mynorm{T}_{L^p\left( V^{\frac{p}{q}}\right) \longrightarrow L^q(V)}. \nonumber
\end{align*}
Clearly we may assume that $\mynorm{\left[M_B, T \right]}_{L^p\left( U^{\frac{p}{q}}\right) \longrightarrow L^q(V)} < \infty$ and so by assumption all quantities above are finite.  Re-scalling $B \mapsto rB$ for $r>0$, dividing by $r$, and taking $r \longrightarrow \infty$, we obtain
\begin{equation*}
    \mynorm{B}_{\widetilde{BMO}^{p,q}_{V,U}} \lesssim \mynorm{\left[M_B,I_\alpha \right]}_{L^p\left( U^{\frac{p}{q}}\right) \longrightarrow L^q(V)}.
\end{equation*}
which is the desired lower bound.

\section{Proof of Theorem \ref{BloomFrac}} \label{JNFracSec} We now prove Theorem \ref{BloomFrac} (assuming that $I_\al$ is a fractional lower bound operator, which will be proved in the next section) by proving that $\|B\|_{\BMOVUpq} \approx \|B\|_{\BMOVUTpq}$  when $U, V$ are matrix A${}_{p, q}$ weights (see Theorem \ref{JNFrac}).  To do this we need the concept of a reducing matrix.  In particular,  for any norm $\rho$ on $\Cn$ there exists a positive definite $n \times n$ matrix $A$ where for any $\V{e} \in \Cn$ we have $$n^{-1} \abs{A\V{e}} \leq \rho(\V{e}) \leq \abs{A\V{e}}$$ (see \cite[Lemma 11.4]{NT}).

In particular, for any matrix weight $U$ and measurable $0 < |E| < \infty $ there exists $n \times n$ matrices $\MC{U}_E, \MC{U}_E '$ where for any $\V{e} \in \Cn$ we have $$\abs{\MC{U}_E \V{e}} \approx \pr{ \fint_E \abs{U^\frac{1}{q} (x) \V{e}}^{q} \, dx}^\frac{1}{q}, \qquad \abs{\MC{U}_E '  \V{e}} \approx \pr{ \fint_E \abs{U^{-\frac{1}{q}} (x) \V{e}}^{p'} \, dx}^\frac{1}{p'}.$$  Similarly for a matrix weight $V$ we will use the notation $\MC{V}_E$ and $\MC{V}_E '$ for these reducing matrices.  Using reducing matrices in conjunction with elementary linear algebra, it is easy to see that for a matrix weight $U$ we have  $$\Apq{U} ^\frac{1}{q} \approx  \sup_Q \norm{\MC{U}_Q \MC{U}_Q ' } = \sup_Q \norm{\MC{U}_Q '  \MC{U}_Q  } \approx \sup_{Q} \pr{\fint_Q \left( \fint_Q \|W^{\frac{1}{q}} (x) W^{- \frac{1}{q}} (y) \|^{q} \, dx \right)^\frac{p'}{q } \, dy}^\frac{1}{p'}$$ and similarly an easy application of H\"{o}lder's inequality gives us that $$\abs{\innp{\V{e}, \V{f}}_{\Cn}} \leq \abs{\MC{U}_Q \V{e}} \abs{\MC{U}_Q ' \V{f}}$$ for  $\V{e}, \V{f} \in \Cn$, which clearly implies that \begin{equation}  \norm{\MC{U}_Q ^{-1} (\MC{U}_Q ')^{-1}} \leq 1 \label{MatrixHolder}. \end{equation}

 We now prove the following matrix weighted John-Nirenberg type theorem, which should be thought of as a fractional generalization of the matrix weighted John-Nirenberg theorem from \cite{IPT}.

\begin{thm} \label{JNFrac}  If $U, V$ are two $m \times m$ matrix weights such that $U, V \in \T{A}_{p, q}$ and $B$ is an $m \times m$ locally integrable matrix function, then the following are equivalent (where the suprema is taken over all cubes $Q$).
\begin{itemize}
\item[(1)] $\displaystyle \sup_Q \fint_Q \norm{{\MC{V}}_Q (B(x) - m_Q B) {\MC{U}}_Q ^{-1}} \, dx $
\item[(2)] $\displaystyle \sup_Q \pr{\fint_Q \norm{V^\frac{1}{q} (x)  (B(x) - m_Q B) {\MC{U}}_Q ^{-1}} ^q  \, dx}^\frac{1}{q} $
\item[(3)] $\displaystyle \sup_Q \pr{\fint \norm{U^{-\frac{1}{q}} (x)  (B^*(x) - m_Q B^*) ({\MC{V}} ' _Q) ^{-1}} ^{p'}  \, dx}^\frac{1}{p'} $
\item[(4)] $\displaystyle \sup_Q \pr{\fint_Q \pr{ \fint_Q \norm{V^\frac{1}{q} (x)  (B(x) - B(y)) U^{-\frac{1}{q}} (y) } ^{p'}  \, dy}^\frac{q}{p'} \, dx}^\frac{1}{q} $
\item[(5)] $\displaystyle \sup_Q \pr{\fint_Q \pr{ \fint_Q \norm{V^\frac{1}{q} (x)  (B(x) - B(y)) U^{-\frac{1}{q}} (y) } ^{q}  \, dx}^\frac{p'}{q} \, dy}^\frac{1}{p'} $
  \item[(6)] $\displaystyle \sup_Q \pr{\fint_Q \pr{ \fint_Q \norm{V^\frac{1}{q} (x)  (B(x) - B(y)) U^{-\frac{1}{q}} (y)} ^{q}  \, dx}^\frac{q'}{q} \, dy}^\frac{1}{q'} $
\end{itemize}
\end{thm}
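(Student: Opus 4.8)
The plan is to prove the chain of equivalences by establishing a cycle of implications together with a few cross-comparisons, exploiting the reducing matrix inequalities $n^{-1}|A\V e|\le \rho(\V e)\le |A\V e|$ and the matrix Hölder inequality \eqref{MatrixHolder}. The overall strategy mirrors the scalar John--Nirenberg argument adapted to the fractional, two-weight, matrix setting as in \cite{IPT}, with the role of the $A_{p,q}$ hypothesis being to allow one to interchange the reducing matrices $\MC U_Q,\MC U_Q'$ and $\MC V_Q,\MC V_Q'$ up to the $A_{p,q}$ constant. I would first record the trivial equivalences: $(2)\Leftrightarrow(3)$ follows by duality (passing to adjoints, using that $\|A\|=\|A^*\|$ for matrices, and identifying $\MC V_Q$ with the appropriate reducing matrix of $V^{1/q}$ acting on the left versus $({\MC V'_Q})^{-1}$ on the right via Hölder), and $(5)$ and $(6)$ differ only by which exponent $p'$ versus $q'$ is used on the outer integral in $y$, so their equivalence is again Hölder's inequality in one direction and the $A_{p,q}$ condition of the dual weight in the other (recall $A_{p,q}\subset A_q$ when $q>p$, as observed in the introduction, and symmetrically for the dual).

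Next I would handle the more substantive implications. The implication $(4)\Rightarrow(2)$ is the easy direction: replace $B(y)$ by $m_Q B$ after averaging in $y$, bound $\|V^{1/q}(x)(B(x)-m_QB)\,\fint_Q U^{-1/q}(y)\cdots\|$ and recognize $\fint_Q |U^{-1/q}(y)\V e|^{p'}\,dy\approx |\MC U_Q'\V e|^{p'}$, then use $\|\MC U_Q'\MC U_Q^{-1}\|\lesssim \Apq U^{1/q}$ — wait, more carefully one writes $B(x)-m_QB=\fint_Q(B(x)-B(y))\,dy$ and applies Minkowski/Hölder inside, so $(4)\Rightarrow(2)$ is essentially immediate with no weight constant lost. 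The reverse direction $(2)\Rightarrow(4)$ (equivalently $(1)\Rightarrow(4)$) is the heart of the matter: from the a priori boundedness of averages of $V^{1/q}(x)(B(x)-m_QB)\MC U_Q^{-1}$ one must produce the full double integral $\fint_Q(\fint_Q\|V^{1/q}(x)(B(x)-B(y))U^{-1/q}(y)\|^{p'}dy)^{q/p'}dx$. The standard device is to write $B(x)-B(y)=(B(x)-m_QB)+(m_QB-B(y))$, handle the first term using $(2)$ after inserting $\MC U_Q^{-1}\MC U_Q U^{-1/q}(y)$ and absorbing $\fint_Q\|\MC U_Q U^{-1/q}(y)\|^{p'}dy$ — here one needs that $\MC U_Q$ composed with $U^{-1/q}$ has controlled $L^{p'}$ average, which is exactly where a reverse-Hölder/John--Nirenberg self-improvement of the $A_{p,q}$ reducing matrices enters — and handle the symmetric second term by the dual statement $(3)$. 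The equivalence $(1)\Leftrightarrow(2)$ itself is a reducing-matrix comparison: $\|\MC V_Q(\cdot)\MC U_Q^{-1}\|\approx$ an $L^1$ average versus $L^q$ average of $\|V^{1/q}(x)(\cdot)\MC U_Q^{-1}\|$, which uses the defining property $|\MC V_Q\V e|\approx(\fint_Q|V^{1/q}(x)\V e|^q dx)^{1/q}$ together with a matrix-valued John--Nirenberg inequality upgrading the $L^1$ average to an $L^q$ average.

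I expect the main obstacle to be precisely this last point: establishing the $L^1\to L^q$ (and $L^1\to L^{p'}$ for the dual side) self-improvement for the matrix weighted fractional BMO average, i.e. the genuine John--Nirenberg phenomenon in $(1)\Leftrightarrow(2)$ and the absorption step in $(2)\Rightarrow(4)$. In the scalar case this is classical; in the matrix case one cannot simply use level sets, and the argument in \cite{IPT} presumably proceeds via a Carleson embedding or a stopping-time decomposition on cubes combined with the $A_{p,q}$ property of $U$ and $V$ to control the reducing matrices across scales. I would follow that template: fix a cube $Q$, run a stopping-time argument relative to the average of $\|\MC V_Q(B-m_{Q}B)\MC U_Q^{-1}\|$, use the $A_{p,q}$ condition to compare $\MC V_{Q'}\MC V_Q^{-1}$ and $\MC U_{Q'}^{-1}\MC U_Q$ on stopping children $Q'$ with a geometric loss, and sum the resulting geometric series; the fractional scaling only affects bookkeeping of exponents ($q$ versus $p'$) and not the structure of the argument. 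Once $(1)\Leftrightarrow(2)\Leftrightarrow(3)$ and $(2)\Leftrightarrow(4)$, $(5)\Leftrightarrow(6)$ are in hand, the remaining link $(4)\Leftrightarrow(5)$ is another Hölder-versus-$A_{p,q}$ comparison of the two orders of integration, completing the cycle.
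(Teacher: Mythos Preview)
Your proposal has the right ingredients (reducing matrices, the adjoint/duality trick, the split $B(x)-B(y)=(B(x)-m_QB)+(m_QB-B(y))$, and the matrix John--Nirenberg self-improvement from \cite{IPT}), and would eventually succeed, but the organization differs from the paper's and a couple of the links you label ``easy'' are not. The paper does \emph{not} re-prove the $L^1\to L^q$ John--Nirenberg step; instead it uses that $\T A_{p,q}\subset \T A_q$ to import $(1)\Leftrightarrow(2)\Leftrightarrow(6)$ wholesale from \cite[Corollary~4.7]{IPT}. It then uses the duality $W\in\T A_{p,q}\iff W^{-p'/q}\in\T A_{q',p'}$ systematically: $(1)$ in the data $(V,U,B,q)$ is shown (via $\|\MC V_Q\MC V_Q'\|\approx\Apq{V}^{1/q}$ and \eqref{MatrixHolder}) to coincide with $(1)$ in the dual data $(U^{-p'/q},V^{-p'/q},B^*,p')$, which by another application of the \cite{IPT} result is equivalent to $(3)$; and $(4)$ is obtained as $(5)$ in the dual data. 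The only direct estimate the paper proves is $(6)\le(5)\lesssim(2)+(3)$, the right inequality being exactly your splitting argument.

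Two specific points where your sketch is imprecise. First, the ``reverse $(5)\Rightarrow(6)$ via the $A_{p,q}$ condition of the dual weight'' and ``$(4)\Leftrightarrow(5)$ is another H\"older-versus-$A_{p,q}$ comparison'' are not direct: $(4)$ and $(5)$ interchange both the order of integration \emph{and} the exponents, and Minkowski only helps if one knows the sign of $q-p'$, which is not fixed. The paper never compares $(4)$ and $(5)$ head-on; it gets each separately (one from the split, the other by dualizing). Second, your concern that a ``reverse-H\"older/John--Nirenberg self-improvement of the $A_{p,q}$ reducing matrices'' is needed to control $\fint_Q\|\MC U_Q U^{-1/q}(y)\|^{p'}dy$ is misplaced: that quantity is $\approx\|\MC U_Q\MC U_Q'\|^{p'}\lesssim\Apq U^{p'/q}$ straight from the definitions, with no self-improvement required. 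The genuine John--Nirenberg content sits entirely inside the citation to \cite{IPT}, and the paper's contribution here is the duality bookkeeping that reduces the fractional $(p,q)$ statement to two applications of the one-exponent $(q,q)$ (and $(p',p')$) result.
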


\noindent Note that Lemma $2.2$ in \cite{IKP} says that $|\MC{U}_Q \V{e}| \approx |m_Q (U^\frac{1}{q}) \V{e}|$ if $U \in \T{A}_q$   so that $(1) \approx \|B\|_{\BMOVUpq}$ for $U, V \in \T{A}_{p, q}$.

Before we prove Theorem \ref{JNFrac} we need to discuss some duality properties of matrix A${}_{p, q}$ weights.  To better keep track of the exponents and matrix weights that corresponding to a reducing matrix we temporarily use the notation $\MS{V}_Q (W, q), \MS{V}_Q ' (W, p, q) $ to denote reducing matrices where  $$\abs{\MS{V}_Q (W, q) \V{e}} \approx \pr{ \fint_Q \abs{W^\frac{1}{q} (x) \V{e}}^{q} \, dx}^\frac{1}{q}, \qquad \abs{\MS{V}_Q ' (W, p, q)  \V{e}} \approx \pr{ \fint_Q \abs{W^{-\frac{1}{q}} (x) \V{e}}^{p'} \, dx}^\frac{1}{p'}$$ so that $$\Apq{W} \approx \sup_Q \norm{\MS{V}_Q(W, q) \MS{V}_Q '(W, p, q) }^{p'}, \qquad \Aq{W} \approx \sup_Q \norm{\MS{V}_Q(W, q) \MS{V}_Q'  (W, q, q) }^q.$$  Moreover \begin{equation}\abs{\MS{V}_Q (W^{-\frac{p'}{q}}, p' ) \V{e}} \approx \pr{ \fint_Q \abs{W^{-\frac{1}{q}} (x)  \V{e}}^{p'} \, dx}^\frac{1}{p'} \approx \abs{\MS{V}_Q ' (W, p, q)  \V{e}}\label{DualRedEq1}\end{equation} and similarly \begin{equation} \abs{\MS{V}_Q ' (W^{-\frac{p'}{q}}, q', p' ) \V{e}} \approx \pr{ \fint_Q \abs{W^{\frac{1}{q}} (x) \V{e}}^{q} \, dx}^\frac{1}{q} \approx \abs{\MS{V}_Q (W,  q)  \V{e}}, \label{DualRedEq2}\end{equation} which (as observed in \cite{IM}) means that $W \in \T{A}_{p, q}$ if and only if $W^{-\frac{p'}{q}} \in \T{A}_{q', p'}$.   Note that with this notation we have  $$\MC{V}_Q = \MS{V}_Q (V, q), \qquad \MC{V}_Q ' = \MS{V}_Q ' (V, p, q)$$ and a similar statement holds for $U$.

\begin{proof}[Proof of Theorem \ref{JNFrac}]

Recall from the introduction that $\T{A}_{p, q} \subseteq \T{A}_q$.  Thus, we have from \cite[Corollary 4.7]{IPT} that $(1) \Longleftrightarrow (2) \Longleftrightarrow(6)$.  Moreover, since $U, V \in \T{A}_{p, q}$ if and only if $U^{-\frac{p'}{q}}, V^{-\frac{p'}{q}} \in \T{A}_{q', p'}$, we have that $U^{-\frac{p'}{q}}, V^{-\frac{p'}{q}} \in \T{A}_{p'}$. The fact that $V \in  \T{A}{}_{p, q}$ tells us that (by the A${}_{p, q}$ property and \eqref{MatrixHolder})

\begin{align*} (1) & = \sup_Q \fint_Q \norm{\MC{U}_Q ^{-1} (B^*(x) - m_Q B^*) \MC{V}_Q} \, dx  \approx \sup_Q \fint_Q \norm{\MC{U}_Q ' (B^*(x) - m_Q B^*) (\MC{V}_Q ')^{-1}} \, dx \end{align*}

which by \eqref{DualRedEq1} is nothing but $(1)$  with respect to matrix weights $ V^{-\frac{p'}{q}}, U^{-\frac{p'}{q}},$ the symbol $ B^*$, and the exponent $p'$,  and thus $(1)$  equivalent to $(2)$ with respect to $ V^{-\frac{p'}{q}}, U^{-\frac{p'}{q}}, B^*,$ and $ p'$, which (again by \eqref{DualRedEq1}) is nothing but $(3)$.  This tells us that $(3) \Longleftrightarrow (1) \Longleftrightarrow (2) \Longleftrightarrow(6).$

Furthermore, $(6) \leq (5)$ by an easy application of H\"{o}lder's inequality, since $p'/q' > 1$.  Thus, if we can show that $(5) \lesssim (2) + (3)$ then we will have $(3) \Longleftrightarrow (1) \Longleftrightarrow (2) \Longleftrightarrow(6) \Longleftrightarrow (5)$.  To that end,

\begin{align*} & \pr{\fint_Q \pr{ \fint_Q \norm{V^\frac{1}{q} (x)  (B(x) - B(y)) U^{-\frac{1}{q}} (y) } ^{q}  \, dx}^\frac{p'}{q} \, dy}^\frac{1}{p'}  \\ & \phantom{WWW} \lesssim \pr{ \fint_Q \pr{ \fint_Q \norm{V^\frac{1}{q} (x)  (B(x) - m_Q B) U^{-\frac{1}{q}} (y) } ^{q}  \, dx}^\frac{p'}{q} \, dy}^\frac{1}{p'}
\\ & \phantom{WWWWW} + \pr{ \fint_Q \pr{ \fint_Q \norm{V^\frac{1}{q} (x)  (B(y) -m_Q  B) U^{-\frac{1}{q}} (y) } ^{q}  \, dx}^\frac{p'}{q} \, dy}^\frac{1}{p'} =   (A) + (B).
\end{align*}
  Using the  matrix A${}_{p, q}$ property we get
\begin{align*} (A) & \leq  \pr{ \fint_Q \pr{ \fint_Q \norm{V^\frac{1}{q} (x)  (B(x) - m_Q B) \MC{U}_Q ^{-1} }^q  \norm{ \MC{U}_Q U^{-\frac{1}{q}} (y) } ^{q}  \, dx}^\frac{p'}{q} \, dy}^\frac{1}{p'}
\\ & = \pr{ \fint_Q \pr{ \fint_Q \norm{V^\frac{1}{q} (x)  (B(x) - m_Q B) \MC{U}_Q ^{-1} }^q    \, dx}^\frac{p'}{q} \norm{ \MC{U}_Q U^{-\frac{1}{q}} (y) }^{p'} \, dy}^\frac{1}{p'} \lesssim \norm{U}_{\text{A}_{p, q}} ^\frac{1}{p'} (2). \end{align*}
and likewise
\begin{align*}
(B) & \leq \pr{ \fint_Q \pr{ \fint_Q \norm{V^\frac{1}{q} (x) \MC{V}_Q '}^q  \norm{(\MC{V}_Q ')^{-1}  (B(y) -m_Q  B) U^{-\frac{1}{q}} (y) } ^{q}  \, dx}^\frac{p'}{q} \, dy}^\frac{1}{p'}
\\ & = \pr{ \fint_Q \pr{ \fint_Q \norm{V^\frac{1}{q} (x) \MC{V}_Q '}^q   \, dx}^\frac{p'}{q} \norm{(\MC{V}_Q ')^{-1}  (B(y) -m_Q  B) U^{-\frac{1}{q}} (y) } ^{p'}  \, dy}^\frac{1}{p'} \lesssim \Apq{V}^\frac{1}{p'} (3) \end{align*} Finally, as was observed already, $(1)$ is equivalent to $(1)$  with respect to $ V^{-\frac{p'}{q}}, U^{-\frac{p'}{q}},  B^*,$ and $p'$, which is equivalent to $(5)$ with respect $ V^{-\frac{p'}{q}}, U^{-\frac{p'}{q}},  B^*,$ and $p'$, which is easily be seen to be nothing but $(4)$.
\end{proof}

\section{Commutator lower bound: proof of Theorem \ref{FracLowerBoundThm}}  \label{WienerSection}
In this section we will prove Theorem \ref{FracLowerBoundThm} and in the process prove that $I_\alpha$ is a fractional lower bound operator (which will complete the proof of Theorem \ref{BloomFrac}).  As stated in the introduction, this will be done by modifying Wiener algebra arguments from by \cite{IPT,LT}. Let $W$ be a matrix weight and suppose that $\vec{f} \in L^p \cap L^p\left( W^{\frac{p}{q}}\right)$ and $\vec{g} \in L^q \cap L^{q'}\left( W^{-\frac{q'}{q}}\right)$. Let $E \subset \reals^d$ be measurable. For any $t \in \reals^d$, define \[k_{\alpha,t}(x,y) := e^{-2\pi it \cdot x} k_\alpha(x,y) e^{2\pi it \cdot y} \] where $k_\al(x, y) = |x-y|^{\al - d}$.  We then have from H\"{o}lder's inequality  that
\begin{align*}
    &\myabs{\int_{\reals^d} \int_{\reals^d} \chi_{E \times E}(x,y) k_{\alpha,t}(x,y) \myinnerprod{\vec{f}(y)}{\vec{g}(x)}_{\cplx^n} \dy \dx} \nonumber \\
    &\phantom{W}= \myabs{\int_{\reals^d} \int_{\reals^d} \chi_{E \times E}(x,y) k_{\alpha,t}(x,y) \myinnerprod{W^{\frac{1}{q}} (x) \vec{f}(y)}{W^{-\frac{1}{q}} (x) \vec{g}(x)}_{\cplx^n} \dy \dx} \\
    &\phantom{W}\leq \mynorm{\chi_E I_\alpha \chi_E \, }_{L^p\left( W^\frac{p}{q} \right) \longrightarrow L^q(W)} ||\vec{f} \, ||_{L^p\left( W^\frac{p}{q} \right)} \mynorm{\vec{g} \, }_{L^{q'}\left( W^{-\frac{q'}{q}}\right)}
\end{align*}

\noindent Thus, if $\psi = \hat{\rho}$ for $\rho \in L^1(\Rd)$ (where $\hat{}$ denotes Fourier transform) then,
\begin{align}
    &\myabs{\int_{\reals^d} \int_{\reals^d} \psi\left( \frac{x-y}{\epsilon} \right) \chi_{E \times E}(x,y) k_{\alpha}(x,y) \myinnerprod{\vec{f}(y)}{\vec{g}(x)}_{\cplx^n} \dy \dx}  \nonumber  \\
    &\phantom{W}=  \myabs{\int_{\reals^d} \int_{\reals^d} \epsilon^d \left[ \int_{\reals^d} \rho(\epsilon t)e^{-2\pi it\cdot(x-y)} \dt \right]\chi_{E \times E}(x,y) k_{\alpha}(x,y) \myinnerprod{\vec{f}(y)}{\vec{g}(x)}_{\cplx^n} \dy \dx}  \nonumber \\
    &\phantom{W}=\mynorm{\chi_E I_\alpha \chi_E \, }_{L^p\left( W^\frac{p}{q} \right) \longrightarrow L^q(W)} ||\vec{f} \, ||_{L^p\left( W^\frac{p}{q} \right)} \mynorm{\vec{g} \, }_{L^{q'}\left( W^{-\frac{q'}{q}}\right)} \mynorm{\rho}_{L^1(\reals^d)}, \label{WeinerIneq}
\end{align}

\noindent
which means that \eqref{WeinerIneq} holds if $\psi$ is in the Wiener algebra $ W_0(\reals^d) := \{\hat{\rho} : \rho \in L^1(\Rd)\}$. To prove Theorem \ref{FracLowerBoundThm} we will use \eqref{WeinerIneq} with $\psi(x)  =  |x|^{d-\alpha} \phi(x)$ where $\phi \in C_c^\infty(\Rd)$.  While it is likely known that such a function lies in $W_0(\Rd)$, a precise reference seems difficult to find and thus we will prove it (using a simple idea from \cite{DT}) for the sake of completeness.

\begin{prop} \label{prop:wein}
Let $0 < \alpha < d$. If $\phi \in C_c^\infty(\Rd)$, then $\myabs{\cdot}^{d-\alpha} \phi \in W_0(\reals^d)$.
\end{prop}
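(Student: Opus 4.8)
The plan is to write $\abs{\cdot}^{d-\alpha}\phi$ as a sum of a smooth compactly supported piece, which trivially lies in $W_0(\reals^d)$, and a piece that is singular only at the origin, and to show the singular piece is the Fourier transform of an $L^1$ function by an explicit computation of its Fourier transform (or rather, of a suitable representative). Concretely, fix a cutoff $\eta \in C_c^\infty(\reals^d)$ with $\eta \equiv 1$ near the support of $\phi$, and write $\abs{x}^{d-\alpha}\phi(x) = \bigl(\abs{x}^{d-\alpha}\zeta(x)\bigr)\phi(x)$ where $\zeta \in C_c^\infty$ equals $1$ on $\supp\phi$; since $W_0(\reals^d)$ is a Banach algebra under pointwise multiplication (a product of two Fourier transforms of $L^1$ functions is the Fourier transform of their convolution, which is in $L^1$) and $\phi \in C_c^\infty \subset \schwartz \subset W_0(\reals^d)$, it suffices to prove $\abs{x}^{d-\alpha}\zeta(x) \in W_0(\reals^d)$, i.e. to handle a compactly supported function that agrees with $\abs{x}^{d-\alpha}$ near $0$ and is smooth away from $0$.

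For that piece, split $\abs{x}^{d-\alpha}\zeta(x) = g_1(x) + g_2(x)$ where $g_1$ agrees with $\abs{x}^{d-\alpha}$ on a neighborhood of $0$ and is supported in a small ball, and $g_2 = \abs{x}^{d-\alpha}\zeta(x) - g_1(x) \in C_c^\infty(\reals^d)$, so only $g_1$ needs attention. The key step is the classical fact that $\abs{x}^{d-\alpha}$ is, up to a dimensional constant, its own ``Riesz-type'' Fourier transform in the sense of tempered distributions: the distributional Fourier transform of $\abs{x}^{-s}$ is a constant times $\abs{\xi}^{-(d-s)}$ for $0 < \operatorname{Re} s < d$, so with $s = \alpha - d$... more carefully, I would instead argue directly: following the idea in \cite{DT}, it is enough to exhibit $\rho \in L^1(\reals^d)$ with $\hat\rho = g_1$. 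Take $\rho$ to be (a constant times) $\abs{\xi}^{-(d-\alpha) - d} = \abs{\xi}^{\alpha - 2d}$ convolved/multiplied appropriately — the honest route is: let $h(x) = \abs{x}^{d-\alpha}$ near $0$; since $d - \alpha \in (0,d)$, the homogeneous distribution $\abs{x}^{d-\alpha}$ has Fourier transform $c_{d,\alpha}\abs{\xi}^{-(2d-\alpha)}$, which however is \emph{not} locally integrable at $\xi = 0$. So instead I localize first: $g_1 = h\cdot\chi$ with $\chi \in C_c^\infty$, so $\widehat{g_1} = c_{d,\alpha}\,\abs{\cdot}^{-(2d-\alpha)} * \widehat{\chi}$; since $\widehat\chi \in \schwartz$ and $\abs{\cdot}^{-(2d-\alpha)}$ is locally integrable away from $0$... this still has a non-integrable tail. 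The clean fix, which is the route I would actually carry out: use that $g_1$ is compactly supported and near $0$ behaves like $\abs{x}^{d-\alpha}$, hence $g_1 \in L^1$ and, because $d-\alpha>0$ is \emph{not} an even integer (or regardless of that), $\widehat{g_1}(\xi)$ decays like $\abs{\xi}^{-(d + (d-\alpha))} = \abs{\xi}^{-(2d-\alpha)}$ at infinity with all derivatives decaying correspondingly, by the standard asymptotics for Fourier transforms of functions with an isolated algebraic singularity; since $2d - \alpha > d$, this decay rate is integrable at infinity, and $\widehat{g_1}$ is continuous (as $g_1 \in L^1$), hence $\widehat{g_1} \in L^1(\reals^d)$. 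By Fourier inversion, $g_1 = \widehat{\widehat{g_1}(-\,\cdot\,)}$ with $\widehat{g_1}(-\,\cdot\,) \in L^1$, so $g_1 \in W_0(\reals^d)$.

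The main obstacle is making the asymptotic claim ``$\widehat{g_1}(\xi) = O(\abs{\xi}^{-(2d-\alpha)})$ as $\abs{\xi}\to\infty$, with enough uniformity to conclude $\widehat{g_1}\in L^1$'' rigorous and self-contained; this is exactly the technical heart of \cite{DT}, and I would reproduce it by writing $g_1(x) = \abs{x}^{d-\alpha}\chi(x)$, applying the Fourier transform of the homogeneous distribution $\abs{x}^{d-\alpha}$ (valid since $d-\alpha \in (0,d)$) to get $\widehat{g_1} = c_{d,\alpha}\abs{\cdot}^{\alpha-2d} * \widehat\chi + (\text{smooth rapidly decreasing})$, then estimating the convolution: near $\xi = 0$ it is bounded (the singularity $\abs{\eta}^{\alpha - 2d}$ is non-integrable, so here one must be slightly more careful and instead keep a better-chosen decomposition of $\abs{x}^{d-\alpha}$ into a piece whose transform is handled by homogeneity on an annulus plus error terms), and for $\abs{\xi}$ large the rapid decay of $\widehat\chi$ forces $\widehat{g_1}(\xi) \sim c\,\abs{\xi}^{\alpha - 2d}$. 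Since $\int_{\abs{\xi}>1}\abs{\xi}^{\alpha-2d}\,d\xi < \infty$ precisely because $2d - \alpha > d$, and $\widehat{g_1}$ is bounded near the origin, we get $\widehat{g_1}\in L^1$, completing the proof. I would present this cleanly by first reducing (via the algebra property of $W_0$ and a $C_c^\infty$ cutoff) to the single function $\abs{x}^{d-\alpha}\chi(x)$ with $\chi$ supported in the unit ball and $\equiv 1$ near $0$, and then devoting the bulk of the argument to the $L^1$-bound on its Fourier transform via homogeneity.
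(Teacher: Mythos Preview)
Your route diverges from the paper's, and the step you yourself flag as ``the main obstacle'' is not actually carried out, so as written this is not a complete proof.

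The paper's argument avoids distribution theory entirely. With $\mathscr{F}(x)=|x|^{d-\alpha}\phi(x)$ one checks that $|D^\beta\mathscr{F}(x)|\lesssim|x|^{d-\alpha-|\beta|}$ for every $\beta\in\{0,1\}^d$; choosing any $1<\delta<\min\{d/\alpha,\,2\}$, these derivatives all lie in $L^\delta$ (the worst case $|\beta|=d$ gives the bound $|x|^{-\alpha}$ near the origin, and $\alpha\delta<d$). A short combination of integration by parts, H\"older's inequality, and Hausdorff--Young then yields $\widehat{\mathscr{F}}\in L^1$ directly. No homogeneous distributions, no asymptotic expansions, no Banach-algebra reduction, no splitting into $g_1+g_2$.

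Your plan instead aims for the sharp large-$|\xi|$ behaviour $\widehat{g_1}(\xi)\sim c\,|\xi|^{\alpha-2d}$ via the Fourier transform of the homogeneous distribution $|x|^{d-\alpha}$. Two problems. First, a terminological point: since $d-\alpha>0$, the function $|x|^{d-\alpha}$ is not singular at the origin --- it vanishes there; the obstruction to rapid decay of $\widehat{g_1}$ is lack of smoothness at $0$, not a blow-up. Second, and this is the genuine gap: the distributional Fourier transform of $|x|^{d-\alpha}$ is homogeneous of degree $\alpha-2d<-d$ and hence is not a locally integrable function near $\xi=0$ but a finite-part distribution. Your proposed identity $\widehat{g_1}=c\,|\cdot|^{\alpha-2d}*\widehat\chi$ is therefore a convolution of a Schwartz function with a genuine distribution, and extracting from it that $\widehat{g_1}$ is bounded near $0$ and $O(|\xi|^{\alpha-2d})$ at infinity requires nontrivial work that you acknowledge but do not supply. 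The conclusion you want is true, but the paper's $L^\delta$/Hausdorff--Young device reaches it in a few lines, making your preparatory reductions unnecessary.
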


\begin{proof}
Let $F(x) = \myabs{x}^{d-\alpha}$ and ${\MS{F}}(x) = \myabs{x}^{d-\alpha} \phi(x)$.   If $\be \in \myset{0,1}^d$,  then by an easy induction we have  $$D^\be F (x) = {\Phi_\be (x)}{\myabs{x}^{{d-\al} - 2\myabs{\be }}}$$ where $\Phi_\be(x)$ is the sum of monomials of degree $\myabs{\be}$ in $d$ variables, which means that \begin{equation}
\myabs{D^\be {\MS{F}} (x) } \lesssim \myabs{x}^{d-\alpha - \myabs{\be}} \label{DerivEst} \end{equation}

Now let $1 < \delta < \min \myset{1 + \frac{d - \alpha}{\alpha},2}$.  Using H\"{o}lder's inequality, a standard integration by parts argument, and the Hausdorff-Young inequality, we then have by an argument identical to the one used in \cite[Lemma $3.2$]{IPT} that
\begin{equation*}
    \mynorm{\reallywidehat{{\MS{F}}}}_{L^1(\reals^d)} \lesssim   \left( \int_{\Rd} \myabs{D^\be {\MS{F}}(x)}^{\delta} \dx \right)^{\frac{1}{\delta}}  < \infty
\end{equation*}
which is finite by \eqref{DerivEst}, since \begin{equation*}
    \delta[|\be|-(d-\alpha)] < \left( 1 + \frac{d-\alpha}{\alpha}\right)[d-(d-\alpha)] = \left( 1 + \frac{d-\alpha}{\alpha}\right)\alpha = d.
\end{equation*}
Fourier inversion now immediately completes the proof.  \end{proof}

\noindent
Applying  \eqref{WeinerIneq} with $\psi(x)  =  |x|^{d-\alpha} \phi(x)$ where $\phi \in C_c^\infty (\Rd)$, we obtain the inequality
\begin{align}
    &\myabs{\int_E \int_E \frac{1}{\epsilon^{d-\alpha}} \, \phi\left( \frac{x-y}{\epsilon} \right) \myinnerprod{\vec{f}(y)}{\vec{g}(x)}_{\cplx^n} \dy \dx } \nonumber \\
    &\phantom{WWW}\leq     \mynorm{\chi_E I_\alpha \chi_E \, }_{L^p\left( W^\frac{p}{q} \right) \longrightarrow L^q(W)} ||\vec{f} \, ||_{L^p\left( W^\frac{p}{q} \right)} \mynorm{\vec{g} \, }_{L^{q'}\left( W^{-\frac{q'}{q}}\right)} \mynorm{\rho}_{L^1(\reals^d)}. \label{eq-res-4}
\end{align}
\noindent
We proceed by citing a uniform boundedness result which was (implicitly) proved in \cite[Proposition 3.1]{IM}.

\begin{prop} \label{thm-frac-averaging}
Let $E$ be measurable with $0 < |E| < \infty$ and define the fractional averaging operator by $\mymor{A_E}{L^p\left( W^{\frac{p}{q}}\right)}{L^q(W)}$ by \[A_E \vec{f} := \frac{\chi_E}{|E|^{1-\frac{\alpha}{d}}} \int_E \vec{f}(x) \dx, \]
then
\begin{equation} \label{eq-technicality1}
    \mynorm{ \MC{W}_E ' \MC{W}_E } \approx \mynorm{A_E}_{L^p\left( W^{\frac{p}{q}}\right) \longrightarrow L^q(W)}.
\end{equation}
%
\end{prop}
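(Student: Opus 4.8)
The plan is to compute the operator norm $\mynorm{A_E}_{L^p(W^{p/q}) \to L^q(W)}$ directly, unwinding the defining properties of the reducing matrices, and to see that it is comparable to $\mynorm{\MC{W}_E' \MC{W}_E}$ once the standing relation $\alpha/d + 1/q = 1/p$ is invoked. First I would observe that $A_E \vec{f}$ depends only on the single vector $\vec{c} := \int_E \vec{f}(y)\, dy$, and that $\vec{c}$ depends only on $\vec{f}|_E$; so in computing the operator norm we may restrict to $\vec{f}$ with $\supp \vec{f} \subseteq E$. For such $\vec{f}$ we have $A_E \vec{f} = |E|^{\alpha/d - 1} \chi_E \vec{c}$, and the defining property of $\MC{W}_E$ gives
\[
\mynorm{A_E \vec{f}}_{L^q(W)}^q = |E|^{q(\alpha/d - 1)} \int_E \myabs{W^{\frac{1}{q}}(x) \vec{c}}^q \, dx = |E|^{q(\alpha/d - 1) + 1} \fint_E \myabs{W^{\frac{1}{q}}(x) \vec{c}}^q \, dx \approx |E|^{q(\alpha/d - 1) + 1} \myabs{\MC{W}_E \vec{c}}^q ,
\]
so that $\mynorm{A_E}_{L^p(W^{p/q}) \to L^q(W)}$ is comparable to $|E|^{\alpha/d - 1 + \frac{1}{q}}$ times $\sup\{ \myabs{\MC{W}_E \vec{c}} : \vec{c} = \int_E \vec{f},\ \supp \vec{f} \subseteq E,\ \mynorm{\vec{f}}_{L^p(W^{p/q})} \le 1 \}$.

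The next step is to evaluate this supremum by duality. Substituting $\vec{f} = W^{-\frac{1}{q}} \vec{g}$ turns the constraint into $\mynorm{\vec{g}}_{L^p(E;\Cn)} \le 1$ and gives $\MC{W}_E \vec{c} = \int_E \MC{W}_E W^{-\frac{1}{q}}(y) \vec{g}(y)\, dy$. Pairing with a unit vector $\vec{u} \in \Cn$, moving the matrix $\MC{W}_E W^{-\frac{1}{q}}(y)$ (which is self-adjoint, since the reducing matrix $\MC{W}_E$ may be taken positive definite) onto the other factor, and using the scalar duality $\sup_{\mynorm{\vec{g}}_{L^p(E)} \le 1} \myabs{\int_E \ip{\vec{g}(y)}{\vec{h}(y)}\, dy} = \mynorm{\vec{h}}_{L^{p'}(E)}$, I obtain
\[
\sup_{\mynorm{\vec{f}}_{L^p(W^{p/q})} \le 1} \myabs{\MC{W}_E \vec{c}} = \sup_{\myabs{\vec{u}} \le 1} \pr{ \int_E \myabs{W^{-\frac{1}{q}}(y) \MC{W}_E \vec{u}}^{p'} \, dy }^{\frac{1}{p'}} \approx |E|^{\frac{1}{p'}} \mynorm{\MC{W}_E' \MC{W}_E} ,
\]
the last estimate being the defining property of $\MC{W}_E'$.

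Collecting the powers of $|E|$, we find $\mynorm{A_E}_{L^p(W^{p/q}) \to L^q(W)} \approx |E|^{\alpha/d - 1 + \frac{1}{q} + \frac{1}{p'}} \mynorm{\MC{W}_E' \MC{W}_E}$, and since $\alpha/d - 1 + \frac{1}{q} + \frac{1}{p'} = \alpha/d + \frac{1}{q} - \frac{1}{p} = 0$ under our hypothesis, the power of $|E|$ drops out and \eqref{eq-technicality1} follows. I do not expect any genuine obstacle, since the argument is a straightforward unwinding of definitions; the only points requiring a little care are the twofold use of duality --- first the scalar $L^p$--$L^{p'}$ duality converting the vector average $\vec{c}$ into an $L^{p'}$ average of $W^{-1/q}$, then the recognition of that average through the second reducing matrix $\MC{W}_E'$ --- together with the bookkeeping of the exponents of $|E|$, which is precisely where the fractional scaling relation $\alpha/d + 1/q = 1/p$ enters.
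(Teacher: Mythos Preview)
Your argument is correct and complete. The paper does not supply its own proof of this proposition---it simply cites \cite[Proposition~3.1]{IM}---so your direct computation via the two reducing matrices and the exponent bookkeeping is a genuine self-contained argument rather than a duplication of anything in the text.

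One small slip: the parenthetical claim that $\MC{W}_E W^{-\frac{1}{q}}(y)$ is self-adjoint is false in general (a product of two Hermitian matrices need not be Hermitian). What you actually use, and what is correct, is that each factor is Hermitian, so the adjoint of the product is the product in the reverse order, namely $W^{-\frac{1}{q}}(y)\,\MC{W}_E$; and this is precisely what appears in your subsequent display and what is needed for the defining property of $\MC{W}_E'$ to apply. The computation is unaffected by this mislabeling.
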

\noindent
We can now state and prove the main technical lemma of this section, which immediately proves that $I_\al$ is a fractional lower bound operator.
\begin{lem} \label{thm-main-lower-bound}
If $B$ is a ball and $E \subset B$ with $|E| > 0$, then
\[ \mynorm{ \MC{W}_E ' \MC{W}_E } \lesssim  \left[ \frac{|B|}{|E|} \right]^{1 - \frac{\alpha}{d}} \mynorm{\chi_E I_\alpha \chi_E}_{L^p\left( W^{\frac{p}{q}}\right) \longrightarrow L^q(W)}. \]
\end{lem}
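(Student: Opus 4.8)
The plan is to combine Proposition~\ref{thm-frac-averaging} with the Wiener algebra inequality \eqref{eq-res-4}: by Proposition~\ref{thm-frac-averaging} it suffices to bound $\mynorm{A_E}_{L^p(W^{p/q}) \to L^q(W)}$ from above by $\pr{|B|/|E|}^{1-\frac{\alpha}{d}}\,\mynorm{\chi_E I_\alpha \chi_E}_{L^p(W^{p/q}) \to L^q(W)}$, and we may of course assume this last operator norm is finite.

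First I would fix once and for all a cutoff $\phi \in C_c^\infty(\Rd)$ with $0 \le \phi \le 1$, with $\phi \equiv 1$ on $\myset{|z| \le 2}$, and with $\supp \phi \subseteq \myset{|z| \le 3}$. By Proposition~\ref{prop:wein}, $\myabs{\cdot}^{d-\alpha}\phi = \hat{\rho}$ for some $\rho \in L^1(\Rd)$, and $\mynorm{\rho}_{L^1(\Rd)}$ depends only on $\phi, d, \alpha$, hence is absorbed into implied constants. Let $r$ be the radius of $B$, so $|B| \approx r^d$. Since $E \subseteq B$ we have $|x - y| \le 2r$ for all $x, y \in E$, whence $\phi\pr{(x-y)/r} = 1$ on $E \times E$. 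Applying \eqref{eq-res-4} with $\epsilon = r$ and this $\phi$, the mollifier disappears on $E \times E$ and we obtain, for all $\vec f \in L^p \cap L^p(W^{p/q})$ and $\vec g \in L^q \cap L^{q'}(W^{-q'/q})$,
\[ \frac{1}{r^{d-\alpha}}\myabs{\int_E \int_E \myinnerprod{\vec f(y)}{\vec g(x)}_{\cplx^n}\, \dy\, \dx} \lesssim \mynorm{\chi_E I_\alpha \chi_E}_{L^p(W^{p/q}) \to L^q(W)}\,\mynorm{\vec f}_{L^p(W^{p/q})}\,\mynorm{\vec g}_{L^{q'}(W^{-q'/q})}. \]

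Next I would rewrite the double integral in terms of $A_E$. From $A_E\vec f = \chi_E\, |E|^{\frac{\alpha}{d}-1}\int_E \vec f$ one has $\int_E\int_E \myinnerprod{\vec f(y)}{\vec g(x)}_{\cplx^n}\, \dy\, \dx = |E|^{1-\frac{\alpha}{d}}\myinnerprod{A_E\vec f}{\vec g}$ for the unweighted $L^2$ pairing; since also $r^{d-\alpha} \approx (r^d)^{1-\frac{\alpha}{d}} \approx |B|^{1-\frac{\alpha}{d}}$, the display becomes
\[ \myabs{\myinnerprod{A_E\vec f}{\vec g}} \lesssim \pr{\frac{|B|}{|E|}}^{1-\frac{\alpha}{d}}\mynorm{\chi_E I_\alpha \chi_E}_{L^p(W^{p/q}) \to L^q(W)}\,\mynorm{\vec f}_{L^p(W^{p/q})}\,\mynorm{\vec g}_{L^{q'}(W^{-q'/q})}. \]
A routine truncation removes the auxiliary hypotheses $\vec f \in L^p$, $\vec g \in L^q$: one replaces $\vec f$ by $\chi_E \vec f$ (which leaves $A_E \vec f$ unchanged), approximates by bounded functions supported on the finite-measure set $E$, and passes to the limit by dominated convergence, using the finiteness $\mynorm{A_E}_{L^p(W^{p/q}) \to L^q(W)} \approx \mynorm{\MC{W}_E' \MC{W}_E} < \infty$ from Proposition~\ref{thm-frac-averaging}. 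Then the standard matrix-weighted duality between $L^q(W)$ and $L^{q'}(W^{-q'/q})$ under the unweighted pairing upgrades the last display to $\mynorm{A_E\vec f}_{L^q(W)} \lesssim \pr{|B|/|E|}^{1-\frac{\alpha}{d}}\mynorm{\chi_E I_\alpha \chi_E}_{L^p(W^{p/q}) \to L^q(W)}\mynorm{\vec f}_{L^p(W^{p/q})}$, and a final appeal to Proposition~\ref{thm-frac-averaging} completes the proof.

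Essentially everything here is bookkeeping; the one conceptual point — and the reason the factor $\pr{|B|/|E|}^{1-\frac{\alpha}{d}}$ appears — is that the mollifier $\phi\pr{(x-y)/\epsilon}$ in \eqref{eq-res-4} must be identically $1$ on all of $E \times E$, and since $E$ can be spread throughout $B$ this forces $\epsilon \approx r$, the scale of $B$ rather than that of $E$. Thus the bilinear form one directly controls is normalized by $|B|^{1-\alpha/d}$ while $A_E$ is normalized by $|E|^{1-\alpha/d}$, and converting between the two costs exactly that ratio.
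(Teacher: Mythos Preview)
Your proof is correct and follows essentially the same route as the paper's: choose $\phi\in C_c^\infty(\Rd)$ with $\phi\equiv 1$ on $B(0,2)$, apply \eqref{eq-res-4} with $\epsilon$ equal to the radius of $B$ so that the mollifier vanishes on $E\times E$, rewrite the resulting double integral as $|E|^{1-\alpha/d}\langle A_E\vec f,\vec g\rangle$, and finish with duality, density, and Proposition~\ref{thm-frac-averaging}. The only cosmetic difference is that the paper invokes a cited density result for the weighted spaces whereas you sketch an explicit truncation argument; both serve the same purpose.
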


\begin{proof}
Suppose that $\vec{f} \in L^p \cap L^p\left( W^{\frac{p}{q}}\right)$ and $\vec{g} \in L^{q'} \cap L^{q'}\left( W^{-\frac{q'}{q}}\right)$ Clearly we have
\begin{align*}
  \left[ \frac{|E|}{|B|} \right]^{1 - \frac{\alpha}{d}} \myinnerprod{A_E\vec{f}}{\vec{g}}_{L^2(\reals^d)} =   \frac{1}{|B|^{1 - \frac{\alpha}{d}}} \int_{E} \int_{E} \myinnerprod{\vec{f}(y)}{\vec{g}(x)}_{\cplx^n} \dy \dx.
\end{align*}
\noindent
Let $B$ have radius $\epsilon$ and pick $\phi \in C_c ^\infty(\Rd)$ such that $\phi =  1$ on the open ball $B(0,2)$. If $x,y \in B$ then $\myabs{x-y} \leq  2\epsilon$ and therefore \eqref{eq-res-4} gives us that
\begin{align}
    & \myabs{\frac{1}{|B|^{1 - \frac{\alpha}{d}}} \int_{E} \int_{E} \myinnerprod{\vec{f}(y)}{\vec{g}(x)}_{\cplx^n} \dy \dx}  \approx \myabs{\frac{1}{(\epsilon^d)^{1 - \frac{\alpha}{d}}} \int_{E} \int_{E} \myinnerprod{\vec{f}(y)}{\vec{g}(x)}_{\cplx^n} \dy \dx} \nonumber \\
    &\phantom{WWW}\lesssim \mynorm{\chi_E I_\alpha \chi_E \, }_{L^p\left( W^\frac{p}{q} \right) \longrightarrow L^q(W)} ||  \vec{f} \, ||_{L^p\left( W^\frac{p}{q} \right)} \mynorm{ \vec{g} \, }_{L^{q'}\left( W^{-\frac{q'}{q}}\right)}  \nonumber
\end{align}
which means
\begin{align}
    \myabs{ \myinnerprod{A_E\vec{f}}{\vec{g}}_{L^2(\reals^d)} } \lesssim \left[ \frac{|B|}{|E|} \right]^{1-\frac{\alpha}{d}} \mynorm{\chi_E I_\alpha \chi_E \, }_{L^p\left( W^\frac{p}{q} \right) \longrightarrow L^q(W)} || \vec{f} \, ||_{L^p\left( W^\frac{p}{q} \right)} \mynorm{\vec{g} \, }_{L^{q'}\left( W^{-\frac{q'}{q}}\right)}.
\end{align}

Duality,  the density of $L^p \cap L^p\left( W^{\frac{p}{q}}\right)$ in $L^p\left( W^{\frac{p}{q}}\right)$ and $ L^q \cap L^{q'}\left( W^{-\frac{q'}{q}}\right)$ in $ L^{q'}\left( W^{-\frac{q'}{q}}\right)$ (see \cite[Proposition 3.7]{CMR}), and \eqref{eq-technicality1} now completes the proof.
\end{proof}
\begin{proof} [Proof of Theorem \ref{FracLowerBoundThm}]
We assume that $\mynorm{\left[M_B,I_\alpha \right]}_{L^p\left( U^{\frac{p}{q}}\right) \longrightarrow L^q(V)} < \infty$, or the lower bound holds trivially. Let $B$ be a ball and, for each $M>0$, define
\begin{equation}
    E_M := \left\{ x \in B : \max \left\{ \mynorm{U(x)}, \mynorm{U^{-1}(x)}, \mynorm{V(x)}, \mynorm{V^{-1}(x)} \right\} < M \right\}.
\end{equation}
By continuity of Lebesgue measure, there exists $M$ such that $2|E_M| > |B|$. Further, define
\begin{equation}
    \mynorm{W}_{A_{p,q}(E_M)} := \fint_{E_M} \left( \, \fint_{E_M} \mynorm{W^{\frac{1}{q}}(x)W^{-\frac{1}{q}}(y)}^{p'} \dy \right)^{\frac{q}{p'}} \dx
\end{equation}
and
\begin{equation}
    \mynorm{B}_{\widetilde{BMO}^{p,q}_{V,U}({E_M})} := \left( \fint_{E_M} \left( \, \fint_{E_M} \mynorm{V^{\frac{1}{q}}(x) (B(x)-B(y)) U^{-\frac{1}{q}}(y)}^{p'} \dy \right)^{\frac{q}{p'}} \dx \right)^{\frac{1}{q}}.
\end{equation}
Let $W$ and $\Phi$ be defined as in Subsection \ref{IntUBFracSec}.  Using ideas from that subsection, we have
\begin{align}
    \mynorm{W}_{A_{p,q}({E_M})}
        &= \fint_{E_M} \left( \, \fint_{E_M} \mynorm{W^{\frac{1}{q}}(x)W^{-\frac{1}{q}}(y)}^{p'} \dy \right)^{\frac{q}{p'}} \dx = \fint_{E_M} \left( \, \fint_{E_M} \mynorm{\Phi(x)\Phi^{-1}(y)}^{p'} \dy \right)^{\frac{q}{p'}} \dx \nonumber \\
        &\approx \mynorm{U}_{A_{p,q}({E_M})} + \mynorm{V}_{A_{p,q}({E_M})} + \mynorm{B}^q_{\widetilde{BMO}^{p,q}_{V,U}({E_M})}, \nonumber
\end{align}
so that
\begin{align}
    &\left( \mynorm{U}_{A_{p,q}(E_M)} + \mynorm{V}_{A_{p,q}(E_M)} + \mynorm{B}^q_{\widetilde{BMO}^{p,q}_{V,U}(E_M)}  \right)^\frac{1}{q}  \approx \mynorm{W}^{\frac{1}{q}}_{A_{p,q}(E_M)} \approx \mynorm{\MC{W}_{E_M} ' \MC{W}_{E_M}} \nonumber \\
    &\phantom{WWW}\lesssim \left[ \frac{|B|}{|E_M|} \right]^{1-\frac{\alpha}{d}} \mynorm{\chi_{E_M}I_\alpha \chi_{E_M}}_{L^p\left( W^{\frac{p}{q}}\right) \longrightarrow L^q(W)} \nonumber \\
    &\phantom{WWW}\lesssim \mynorm{\left[M_B,I_\alpha \right]}_{L^p\left( U^{\frac{p}{q}}\right) \longrightarrow L^q(V)} \nonumber \\
        &\phantom{WWWWW}+ \mynorm{\chi_{E_M}I_\alpha \chi_{E_M}}_{L^p\left( U^{\frac{p}{q}}\right) \longrightarrow L^q(U)}
        + \mynorm{\chi_{E_M}I_\alpha \chi_{E_M}}_{L^p\left( V^{\frac{p}{q}}\right) \longrightarrow L^q(V)}. \nonumber
\end{align}
By assumption, all quantities above are finite so we can rescale with the replacement $B \mapsto rB$, for $r>0$. Upon dividing by $r$ and taking $r \longrightarrow \infty$, we obtain
\begin{equation*}
    \mynorm{B}_{\widetilde{BMO}^{p,q}_{V,U}(E_M)} \lesssim \mynorm{\left[M_B,I_\alpha \right]}_{L^p\left( U^{\frac{p}{q}}\right) \longrightarrow L^q(V)}.
\end{equation*}
Applying Fatou's lemma with $M \longrightarrow \infty$ and taking the supremum over all balls $B$ in $\reals^d$, we obtain
\begin{align}
   \mynorm{B}_{\widetilde{BMO}^{p,q}_{V,U}}
        \lesssim \mynorm{\left[M_B,I_\alpha \right]}_{L^p\left( U^{\frac{p}{q}}\right) \longrightarrow L^q(V)}, \nonumber
\end{align}
and a slight modification to the arguments above proves that $$\|B\|_{\BMOVUTpqd} \lesssim \mynorm{\left[M_B,I_\alpha \right]}_{L^p\left( U^{\frac{p}{q}}\right) \longrightarrow L^q(V)}$$ which proves Theorem \ref{FracLowerBoundThm}. \end{proof}
\section{Proof of theorem \ref{FracOrliczProp}} \label{OrliczUpBound}
Our proof is a combination and modification of the arguments in  \cite{IPT,CIM,LI}. For additional information on Orlicz spaces, see e.g., \cite{BerLof}.


\begin{prop} There exists $2^d$ dyadic grids $\D_t, t \in \{0, 1\}^d$ where
\begin{align*}
    &\innp{V ^\frac{1}{q} [M_B, I_\al] U^{-\frac{1}{q}} \V{f}, {\V{g}}}_{L^2}  \\
    &\phantom{WW}  \lesssim  \sum_{t \in \{0, \frac13\}^d} \sum_{Q \in \D^t} \frac{1}{|Q| ^{1 - \frac{\alpha}{d}}} \int_Q \int_Q \left|\innp{ V ^\frac{1}{q} (x)  (B(x) - B(y)) U^{-\frac{1}{q}}(y)\V{f}(y),  \V{g}(x) } _{\cplx^n} \right| \, dx \, dy.
\end{align*}
\end{prop}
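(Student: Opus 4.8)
The plan is to reduce the matrix-weighted commutator to a scalar sparse form via a standard Lerner-type sparse domination argument applied to the fractional integral operator $I_\al$, combined with the algebraic identity $[M_B, I_\al]\vec{f} = M_B I_\al \vec{f} - I_\al M_B \vec{f}$ and the kernel representation of $I_\al$. First I would recall that $I_\al$ admits a pointwise sparse bound: there exist $2^d$ dyadic grids $\D^t$, $t \in \{0,\tfrac13\}^d$, and (for each fixed $\vec f$, $\vec g$) sparse subfamilies $\mathcal{S}^t \subseteq \D^t$ such that the bilinear form $\langle I_\al (h\vec f), \vec g\rangle$ is controlled by $\sum_t \sum_{Q \in \mathcal{S}^t} |Q|^{\frac{\al}{d}} \, m_Q(|h| |\vec f|) \, m_Q |\vec g|$; this is the fractional analogue of the Lacey/Lerner sparse domination and underlies the one-third trick that produces the grids $\D^t$. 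For the commutator, the key observation (as in Hyt\"onen's and Lerner--Ombrosi--Rivera-R\'ios's treatments) is that when one writes the kernel difference $k_\al(x,y)$ localized to a cube $Q$ of the stopping family, the symbol difference $B(x) - B(y)$ appears naturally: on $[M_B, I_\al]\vec f(x) = \int (B(x) - B(y)) k_\al(x,y) \vec f(y)\, dy$, and the sparse/stopping-time decomposition of $I_\al$ replaces the kernel $k_\al(x,y)$ on each selected cube $Q$ by the average $|Q|^{\al/d - 1}\chi_Q(x)\chi_Q(y)$, up to the usual telescoping error terms.

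Concretely, the steps in order: (1) Fix $\vec f \in L^p(U^{p/q})$, $\vec g \in L^{q'}(V^{-q'/q})$ with the appropriate integrability, and expand $\langle V^{1/q}[M_B,I_\al]U^{-1/q}\vec f, \vec g\rangle_{L^2} = \int\int (B(x)-B(y)) k_\al(x,y)\langle V^{1/q}(x) U^{-1/q}(y)\vec f(y), \vec g(x)\rangle \, dx\, dy$ — more precisely carrying the weights inside as $\langle (B(x)-B(y)) U^{-1/q}(y)\vec f(y), (V^{1/q}(x))^* \vec g(x)\rangle_{\cplx^n}$. (2) Apply the one-third trick: decompose $\reals^d$ so that for each pair $(x,y)$ the "relevant" cube containing both lies in one of the $2^d$ shifted dyadic grids $\D^t$, reducing to a sum over $t$ of dyadic bilinear forms. (3) Run the Calder\'on--Zygmund stopping-time/sparse selection argument for $I_\al$ within each grid $\D^t$: select a sparse family by stopping when local averages of $|U^{-1/q}\vec f|$ or $|(V^{1/q})^*\vec g|$ double, and use that on each stopping cube $Q$ the kernel $k_\al(x,y)$ with $|x-y|\sim \ell(Q)$ contributes at most $\lesssim \ell(Q)^{\al - d} = |Q|^{\al/d - 1}$, so the local piece is bounded by $|Q|^{\al/d-1}\int_Q\int_Q |\langle V^{1/q}(x)(B(x)-B(y))U^{-1/q}(y)\vec f(y),\vec g(x)\rangle|\,dx\,dy$. (4) Sum over the stopping cubes and the grids to obtain exactly the claimed bound, absorbing the sparse constant into the implied $\lesssim$.

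The main obstacle I anticipate is step (3): one must make sure the stopping-time argument localizes the kernel $k_\al(x,y)$ to the diagonal block $\chi_Q(x)\chi_Q(y)$ \emph{without} separating $B(x)$ from $B(y)$, since the whole point of the proposition is that the difference $B(x)-B(y)$ stays together inside the double integral (this is what later allows the Orlicz/bump estimates on $\|V^{1/q}(x)(B(x)-B(y))U^{-1/q}(y)\|$). In the scalar CZO commutator sparse bound one typically gets terms like $\|b - b_Q\|$ times an average of $f$; here instead we keep the full two-variable expression, which is actually \emph{easier} than the usual commutator sparse form because no oscillation $b - b_Q$ needs to be extracted — the price is only the loss of a clean $L^1$ normalization, but that is harmless since the outer Orlicz norms in Proposition \ref{FracOrliczProp} will handle it. A secondary technical point is verifying that the "long-range" interactions (pairs $(x,y)$ with $x,y$ in the same selected cube at different scales, i.e. the telescoping tails of $I_\al$) are also dominated by the same sparse sum; this follows the standard argument since $I_\al$ has a kernel that decays like $|x-y|^{\al-d}$ and the stopping cubes are nested with geometrically decreasing measure, so the geometric series converges and reproduces the diagonal sum up to a constant.
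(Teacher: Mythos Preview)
Your proposal overcomplicates matters by framing this as a sparse domination result requiring a Lerner-type stopping-time argument. Notice that the sum on the right-hand side of the proposition runs over \emph{all} cubes $Q\in\D^t$, not over a sparse subfamily; no stopping times, no Calder\'on--Zygmund selection, and no dependence on $\vec f,\vec g$ are involved.

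The paper's proof is immediate once one writes out the kernel. The operator $V^{1/q}[M_B,I_\al]U^{-1/q}$ has integral kernel $V^{1/q}(x)(B(x)-B(y))U^{-1/q}(y)\,|x-y|^{\al-d}$, and one simply invokes the elementary pointwise inequality
\[
|x-y|^{\al-d}\ \lesssim\ \sum_{t\in\{0,\frac13\}^d}\ \sum_{Q\in\D^t}\frac{1}{|Q|^{1-\al/d}}\,\chi_Q(x)\chi_Q(y),
\]
which is the standard dyadic decomposition of the fractional kernel via the one-third trick (this is exactly the content of \cite[Lemma~3.8]{IM}, to which the paper refers). Multiplying both sides by $|\langle V^{1/q}(x)(B(x)-B(y))U^{-1/q}(y)\vec f(y),\vec g(x)\rangle|$ and integrating gives the claim.

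Your steps (1) and (2), and the observation in (3) that $k_\al(x,y)\lesssim |Q|^{\al/d-1}$ when $|x-y|\sim\ell(Q)$, are exactly what is needed; but step (3)'s stopping procedure (``stopping when local averages double'') is entirely superfluous here and would produce a sparse family depending on $\vec f,\vec g$, which is neither asserted nor required. The ``main obstacle'' you flag --- keeping $B(x)-B(y)$ together --- is a non-issue precisely because no stopping is performed: the kernel bound is purely pointwise in $(x,y)$ and never touches $B$. The sparse reduction is instead carried out \emph{later}, in the proof of Proposition~\ref{FracOrliczProp}, where the full dyadic sum is organized into a Carleson/sparse family via a stopping argument on $\|\vec f\|_{\bar D,Q}$.
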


\begin{proof} Noting that $V ^{\frac{1}{q}} [M_B, I_\al] U^{-\frac{1}{q}}$ is an integral operator with kernel $V ^\frac{1}{q}  (x) (B(x) - B(y)) U^{-\frac{1}{q}}(y) |x-y|^{\al - d}$, the proof is almost identical to the proof of \cite[Lemma $3.8$]{IM}.   \end{proof}

As in \cite{LI}, we will make use of the following well known fact about Orlicz spaces \begin{prop} \label{OrliczProp}  For an increasing, convex function $\Phi$, if $$ \norm{f}_{\Phi, Q} = \inf \set{\lambda > 0 : \fint_Q \Phi\pr{\frac{|f(y)|}{\lambda}} \, dy \leq 1} \text{ and } \norm{f}_{\Phi, Q} ^* = \inf_{s > 0} \set{ s + s \fint_Q \Phi\pr{\frac{|{f}(y)|}{s}} \, dy
}$$ then $\norm{{f}}_{\Phi, Q}  \leq \norm{{f}}_{\Phi, Q} ^*  \leq 2 \norm{{f}}_{\Phi, Q} $
\end{prop}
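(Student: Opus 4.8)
The plan is to prove the two inequalities separately by elementary direct arguments, the only real input being the convexity of $\Phi$ together with the normalization $\Phi(0)=0$ that holds because $\Phi$ is a Young function in the present Orlicz-space setting. Concretely, convexity and $\Phi(0)=0$ give the homogeneity-type estimate $\Phi(\theta t)\le\theta\,\Phi(t)$ for all $\theta\in[0,1]$ and $t\ge 0$, and this is the single nontrivial ingredient; everything else is bookkeeping. Note also that $\norm{f}_{\Phi,Q}^{*}$ is (twice the Luxemburg norm, essentially) the Amemiya-type functional, so the statement is just the standard comparison between these two equivalent norms, adapted to averages over a fixed cube $Q$.

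For the bound $\norm{f}_{\Phi,Q}\le\norm{f}_{\Phi,Q}^{*}$, I would fix an arbitrary $s>0$, set $A:=\fint_Q\Phi(\abs{f(y)}/s)\,dy$ and $\lambda:=s+sA=s(1+A)$, and observe that $s/\lambda\in(0,1]$. Applying $\Phi(\theta t)\le\theta\Phi(t)$ with $\theta=s/\lambda$ and $t=\abs{f(y)}/s$, and then averaging over $Q$, one obtains $\fint_Q\Phi(\abs{f(y)}/\lambda)\,dy\le(s/\lambda)A=A/(1+A)\le1$, so that $\norm{f}_{\Phi,Q}\le\lambda=s+s\fint_Q\Phi(\abs{f(y)}/s)\,dy$. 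Taking the infimum over $s>0$ finishes this direction (and the case $\norm{f}_{\Phi,Q}^{*}=\infty$ is vacuous).

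For the bound $\norm{f}_{\Phi,Q}^{*}\le2\norm{f}_{\Phi,Q}$, I would first dispose of the trivial cases $\norm{f}_{\Phi,Q}\in\{0,\infty\}$ (in the first, $f=0$ a.e.\ and both quantities vanish; the second is vacuous), and then, for $0<\norm{f}_{\Phi,Q}<\infty$, use that any $\lambda>\norm{f}_{\Phi,Q}$ satisfies $\fint_Q\Phi(\abs{f(y)}/\lambda)\,dy\le1$ (by monotonicity of $\Phi$ and the definition of the infimum). Choosing $s=\lambda$ in the definition of $\norm{f}_{\Phi,Q}^{*}$ then gives $\norm{f}_{\Phi,Q}^{*}\le\lambda+\lambda\cdot1=2\lambda$, and letting $\lambda\downarrow\norm{f}_{\Phi,Q}$ concludes. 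A minor point worth recording is that, by monotone convergence, the Luxemburg infimum is in fact attained when finite, so $\lambda=\norm{f}_{\Phi,Q}$ is itself admissible and one does not even need a limiting step.

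I do not expect any genuine obstacle: the whole argument is elementary and purely formal. The only places warranting a sentence of justification are the use of $\Phi(0)=0$ in the homogeneity-type inequality $\Phi(\theta t)\le\theta\Phi(t)$ and the short monotone-convergence remark ensuring admissibility of $\lambda=\norm{f}_{\Phi,Q}$, and I would include both for completeness (referring, if needed, to the Orlicz-space background in \cite{CIM,BerLof}).
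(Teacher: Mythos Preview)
Your argument is correct. The paper actually states this proposition as a well-known fact and gives no proof in the compiled text; the commented-out proof in the source follows the same elementary route as yours (plug in $s=\norm{f}_{\Phi,Q}$ for the bound $\norm{f}_{\Phi,Q}^{*}\le 2\norm{f}_{\Phi,Q}$, and show that each $\lambda=s+s\fint_Q\Phi(|f|/s)\,dy$ is admissible in the Luxemburg infimum for the other direction), though your direct use of $\Phi(\theta t)\le\theta\,\Phi(t)$ handles the latter in one stroke, whereas the paper's suppressed argument splits into the cases $s\ge\norm{f}_{\Phi,Q}$ and $s<\norm{f}_{\Phi,Q}$.
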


\begin{proof}[Proof of Proposition \ref{FracOrliczProp}] Clearly it is enough to prove for a dyadic grid $\D$ that
\begin{align*}
     \sum_{Q \in \D} {|Q| ^{
      \frac{\alpha}{d}}} \fint_Q \int_Q \left|\innp{ V ^{\frac{1}{q}} (x)
      (B(x) - B(y)) U ^{-\frac{1}{q}} (y) \V{f}(y),  \V{g}(x) } _{\cplx^n} \right| \, dx \, dy  \lesssim \min\{\kappa_1, \kappa_2\} \|\V{f}\|_{L^p} \|\V{g}\|_{L^{q'}}.
\end{align*}
Also, by Fatou's lemma, it is enough to assume that $\V{f}, \V{g}$ are bounded with compact support. For that matter, the generalized H\"{o}lder inequality gives

\begin{align*}
    & \sum_{Q \in \D} {|Q| ^{
      \frac{\alpha}{d}}} \fint_Q \int_Q \left|\innp{ V ^{\frac{1}{q}} (x)
      (B(x) - B(y)) U ^{-\frac{1}{p}} (y) \V{f}(y),  \V{g}(x) } _{\cplx^n} \right| \, dx \, dy \\
    &\phantom{WWWWWW}  \leq \min\{\kappa_1, \kappa_2\} \sum_{Q \in \D} {|Q| ^{1+ \frac{\alpha}{d}}} \|\V{f}\|_{\ol{D}, Q} \|\V{g}\|_{\ol{C}, Q}
\end{align*}
Fix $a>2^{d+1}$ and define the collection of cubes
 \begin{equation*}
\MC{Q} ^k =
  \{Q \in \D : a^k < \|\V{f}\|_{\bar{D}, Q} \leq a^{k + 1}\},
\end{equation*}
and let $\MC{S}^k$ be the disjoint collection of
$Q\in \D$ that are maximal with respect to the
inequality $\|{f}\|_{\bar{D}, Q} > a^k$. Note that since $\V{f}$ is bounded with compact support, such maximal cubes are guaranteed to exist.
Set $\MC{S} = \bigcup_k \MC{S}^k$.
We now continue the above estimate:
\begin{align}
&  \sum_k \sum_{Q \in \MC{Q}^k} |Q| ^{ 1+ \frac{\alpha}{d}}\|\V{f}\|_{\ol{D}, Q} \|\V{g}\|_{\ol{C}, Q}
\leq  \sum_k  a^{k + 1} \sum_{Q \in \MC{Q}^k} |Q| ^{1 +
  \frac{\alpha}{d}}  \|\V{g}\|_{\ol{C}, Q} \nonumber  \\
& \qquad \qquad = \sum_k  a^{k + 1} \sum_{P \in \MC{S}^k} \sum_{\substack{Q \in
  \MC{Q}^k \\ Q \subset P}} |Q| ^{ 1+ \frac{\alpha}{d}} \|\V{g}\|_{\ol{C}, Q} \label{OrUpBoundEst1}.
\end{align}

We now estimate the inner sum.  Let $\ell(P) = 2^{-m_0}$, so

\begin{align}  \sum_{Q \subset P} & |Q| ^{1+ \frac{\alpha}{d}} \|\V{g}\|_{\ol{C}, Q}   = \sum_{m = m_0} ^\infty 2^{-m \al} \sum_{\substack{\ell(Q) = 2^{-m} \\ Q \subset P}} |Q|\|\V{g}\|_{\ol{C}, Q} \label{OrUpBoundEst2}
\end{align}

However, Proposition \ref{OrliczProp} gives us that
\begin{align*} \sum_{\substack{\ell(Q) = 2^{-m} \\ Q \subset P}} |Q|\|g\|_{\ol{C}, Q}  & \leq  \sum_{\substack{\ell(Q) = 2^{-m} \\ Q \subset P}} |Q| \inf_{s>0} \set{ s + s \fint_Q \ol{C}\pr{\frac{|\V{f}(y)|}{s}} \, dy }
\\ & \leq \inf_{s > 0}\set{ \sum_{\substack{\ell(Q) = 2^{-m} \\ Q \subset P}} \left[|Q|   s + s \int_Q \ol{C}\pr{\frac{|\V{f}(y)|}{s}} \, dy \right]}
\\ & = \inf_{s > 0}\set{ |P|   s + s \int_P \ol{C}\pr{\frac{|\V{f}(y)|}{s}} \, dy }
\\ &  = |P| \inf_{s > 0} \set{  s + s \fint_P \ol{C}\pr{\frac{|\V{f}(y)|}{s}} \, dy }
\leq 2 |P|\|g\|_{\ol{C}, P}
\end{align*}

Plugging this into \eqref{OrUpBoundEst2} we get

\begin{align*} \sum_{m = m_0} ^\infty & 2^{-m \al} \sum_{\substack{\ell(Q) = 2^{-m} \\ Q \subset P}} |Q|\|\V{g}\|_{\ol{C}, Q}
\leq 2 |P|\|\V{g}\|_{\ol{C}, P} \sum_{m = m_0} ^\infty  2^{-m \al}
 \lesssim |P|^{1+\frac{\al}{d}} \|\V{g}\|_{\ol{C}, P}
\end{align*}

and combining this with \eqref{OrUpBoundEst1} gives

\begin{align*} \sum_k   a^{k + 1}  &\sum_{P \in \MC{S}^k} \sum_{\substack{Q \in
  \MC{Q}^k \\ Q \subset P}} |Q| ^{ 1+ \frac{\alpha}{d}} \|\V{g}\|_{\ol{C}, Q}
  \lesssim  \sum_k  \sum_{P \in \MC{S}^k}     |P|^{1+\frac{\al}{d}} \|\V{f}\|_{\ol{D}, P} \|\v{g}\|_{\ol{C}, P}
  \\ & = \sum_{P \in \MC{S}} |P| \pr{|P|^{\frac{\al}{d}}\|f\|_{\ol{D}, P}} \pr{ \|\V{g}\|_{\ol{C}, P}}
  \leq \sum_{P \in \MC{S}} |P| \inf _{ x \in P}    M_{\ol{D}} ^\al \V{f} (x)  M_{\ol{C}} \V{g} (x) \end{align*}
  \noindent where as usual,  $M_{\ol{D}} ^\al \V{f} (x) = \sup_{\D \ni P \ni x} |P|^{\frac{\al}{d}}\|f\|_{\ol{D}, P} $ and when $\al = 0$ we omit the superscript.

 For each  $P \in \MC{S}$, define
 \begin{equation*}
E_P = P \backslash \bigcup_{\substack{P' \in \MC{S} \\ P' \subsetneq
    P}} P'.
\end{equation*}
Then by Proposition $A.1$ in \cite{CruzWeights}, since $\V{f}, \V{g}$ are bounded with compact support, we have that the sets $E_P$ are pairwise
disjoint and $|E_P|\geq \frac{1}{2}|P|$.   Thus,
\begin{align*}
\sum_{P \in \MC{S}} & |P| \inf _{ x \in P}    M_{\ol{D}} ^\al \V{f} (x)  M_{\ol{C}} \V{g} (x)
 \leq 2   \sum_{P \in \MC{S}} |E_P| \inf _{ x \in P}    M_{\ol{D}} ^\al \V{f} (x)  M_{\ol{C}} \V{g} (x)      \\
 & \leq 2   \sum_{P \in \MC{S}} \int_{E_P}    M_{\ol{D}} ^\al \V{f} (x)  M_{\ol{C}} \V{g} (x)  \, dx
     \leq 2 \int_{\mathbb{R}^d} M^\al _{{\ol{D}}}  \V{f} (x)  \, M_{\ol{C}} \V{g} (x) \, dx
    \\ & \leq 2  \|M_{{\ol{D}}} ^\al \V{f}\|_{L^q} \|M_{{\ol{C}}} \V{g}\|_{L^{q'}}
      \lesssim \|\V{f}\|_{L^{p}} \|\V{g}\|_{L^{q'}}.
\end{align*}

\end{proof}

\bibliographystyle{amsalpha}
\bibliography{MatrixWeights}

\providecommand{\bysame}{\leavevmode\hbox to3em{\hrulefill}\thinspace}
\providecommand{\MR}{\relax\ifhmode\unskip\space\fi MR }
\providecommand{\MRhref}[2]{%
  \href{http://www.ams.org/mathscinet-getitem?mr=#1}{#2}
}
\providecommand{\href}[2]{#2}
\begin{thebibliography}{CUMR16}

\bibitem[BL12]{BerLof}
J.~Bergh and J.~L{\"o}fstr{\"o}m, \emph{Interpolation spaces: an introduction},
  vol. 223, Springer Science \& Business Media, 2012.

\bibitem[Blo85]{BL}
Steven Bloom, \emph{A commutator theorem and weighted bmo}, Trans. Amer. Math.
  Soc. \textbf{292} (1985), no.~1, 103--122.

\bibitem[CUIM18]{CIM}
D.~Cruz-Uribe, J.~Isralowitz, and K.~Moen, \emph{Two weight bump conditions for
  matrix weights}, Integr. Equ. Oper. Theory \textbf{90} (2018), no.~3.

\bibitem[CUMP11]{CruzWeights}
D.~Cruz-Uribe, J.~M. Martell, and C.~P{\'e}rez, \emph{Weights, extrapolation
  and the theory of rubio de francia}, vol. 215, Springer Science \& Business
  Media, 2011.

\bibitem[CUMR16]{CMR}
D.~Cruz-Uribe, K.~Moen, and S.~Rodney, \emph{Matrix {$\text{A}_p$} weights,
  degenerate sobolev spaces, and mappings of finite distortion}, J. Geom. Anal.
  \textbf{26} (2016), no.~4, 2797--2830.

\bibitem[DT83]{DT}
H.~Dappa and W.~Trebels, \emph{On {$L^1$} criteria for quasi-radial fourier
  multipliers with applications to some anisotropic function spaces}, Analysis
  Mathematica \textbf{9} (1983), no.~4, 275 -- 289.

\bibitem[GPTV04]{GPTV}
T.~A. Gillespie, S.~Pott, S.~Treil, and A.~Volberg, \emph{Logarithmic growth
  for weighted hilbert transforms and vector hankel operators}, J. Operator
  Theory (2004), 103--112.

\bibitem[HLW16]{HLW1}
I.~Holmes, M.~Lacey, and B.~Wick, \emph{Bloom’s inequality: commutators in a
  two-weight setting}, Arch. Math. (Basel) \textbf{106} (2016), no.~1, 53--63.

\bibitem[HLW17]{HLW2}
\bysame, \emph{Commutators in the two-weight setting}, Math. Ann. \textbf{367}
  (2017), no.~1-2, 51--80.

\bibitem[HRS16]{HRS}
I.~Holmes, R.~Rahm, and S.~Spencer, \emph{Commutators with fractional integral
  operators}, Studia Math. \textbf{233} (2016), 279--291.

\bibitem[IKP17]{IKP}
J.~Isralowitz, H.~K. Kwon, and S.~Pott, \emph{Matrix weighted norm inequalities
  for commutators and paraproducts with matrix symbols.}, J. Lond. Math. Soc.
  \textbf{96} (2017), no.~2, 243--270.

\bibitem[IM19]{IM}
J.~Isralowitz and K.~Moen, \emph{Matrix weighted poincar\'{e} inequalities and
  applications to degenerate elliptic systems.}, Indiana Univ. Math. J.
  \textbf{68} (2019), no.~2, 1327--1377.

\bibitem[IPT]{IPT}
J.~Isralowitz, S.~Pott, and S.~Treil, \emph{Commutators in the two scalar and
  matrix weighted setting}, To appear in J. Lond. Math. Soc., Preprint
  available at \url{https://arxiv.org/abs/2001.11182}.

\bibitem[Li06]{LI}
W.~Li, \emph{Two-weight norm inequalities for commutators of potential type
  integral operators}, J. Math. Anal. Appl. \textbf{322} (2006), no.~2,
  1215--1223.

\bibitem[LT13]{LT}
C.~Liaw and S.~Treil, \emph{Regularizations of general singular integral
  operators.}, Rev. Mat. Iberoam. \textbf{29} (2013), no.~1, 53--74.

\bibitem[NT96]{NT}
F.~Nazarov and S.~Treil, \emph{The hunt for a {B}ellman function: applications
  to estimates for singular integral operators and to other classical problems
  of harmonic analysis}, Algebra i Analiz \textbf{8} (1996), no.~5, 32--162.

\bibitem[Rou03]{R}
S.~Roudenko, \emph{Matrix weighted besov spaces}, Trans. Amer. Math. Soc.
  \textbf{355} (2003), no.~1, 273--314.

\end{thebibliography}

 \end{document}